\documentclass[]{interact}

\usepackage{epstopdf}
\usepackage[caption=false]{subfig}
\usepackage{geometry}
\usepackage{amsmath}
\usepackage{enumerate}
\usepackage{amsmath,amssymb}
\usepackage{amsthm}
\usepackage{enumitem}
\usepackage{cite}
\usepackage{fancyhdr}
\usepackage[numbers,sort&compress]{natbib}
\bibpunct[, ]{[}{]}{,}{n}{,}{,}
\makeatletter
\def\NAT@def@citea{\def\@citea{\NAT@separator}}
\makeatother

\usepackage{color,xcolor}
\theoremstyle{plain}
\newtheorem{theorem}{Theorem}[section]
\newtheorem{corollary}[theorem]{Corollary}
\newtheorem{lemma}[theorem]{Lemma}
\newtheorem{remark}[theorem]{Remark}
\newtheorem{example}[theorem]{Example}

\theoremstyle{plain}
\newtheorem*{remark*}{Remark}
\theoremstyle{plain}
\newtheorem*{corollary*}{Corollary}
\theoremstyle{plain}
\newtheorem*{example*}{Example}
\theoremstyle{definition}
\newtheorem*{Definition*}{Definition}
\numberwithin{equation}{section}

\title{{Orthogonal tripotent matrices}}
\author{\name{Tan Mei\textsuperscript{a}, 
Kezheng Zuo\textsuperscript{a,$\ast$}\thanks{$^{\ast}$ Corresponding author: xiangzuo28@163.com (Kezheng Zuo)} 
and Wanlin Jiang\textsuperscript{b}}
\affil{\textsuperscript{a} Department of Mathematics, School of Mathematics and Statistics, Hubei Normal University, Huangshi 435002, China;}
{\textsuperscript{b} Department of Mathematics, School of Science and Technology, College of Arts and Science of Hubei Normal University, Huangshi 435002, China. }
}
\begin{document}

\maketitle

  \begin{abstract}
In this paper, we present different characterizations of tripotent orthogonal matrices (i.e., $A^3 = A = A^* $) in terms of  matrix equations, integer powers of $AA^*$ and $A^*A$, average of $A$, $A^*$, and $A^{\dagger}$, rank of matrices, and trace of matrices. We study certain properties of this class of matrices.
    \end{abstract}

    \begin{keywords}
      Orthogonal tripotent matrix; Orthogonal $k$-idempotent matrix; Hartwig-Spindelb\"{O}ck decomposition; Moore-Penrose inverse.
      \end{keywords}

  {\small \textbf{2020 MATHEMATICS SUBJECT CLASSIFICATION}   {15A09 } } 

\section{Introduction and preliminaries} 

Orthogonal idempotent matrices (i.e.,  $A^2 = A = A^*$, where $A^*$ denotes the conjugate transpose of $A$) is an important class with wide applications in various fields, including least squares methods in statistics, control theory, operator theory, and matrix equations (see \cite{Toutenburg1991, Riesz2012, Fisher1991,Deng,Tosic,Zou}).

\noindent More generally, we call $A$ an orthogonal $k$-idempotent matrix if it satisfies $A^k = A = A^*$ for some integer $k \geq 2$. The Hermitian condition $A = A^*$ ensures that all eigenvalues of $A$ are real numbers, so for Hermitian $A$ the condition $A^k = A$ restricts them to $\pm 1$ or $0$. Consequently, any such matrix $A$ is unitarily similar to a diagonal matrix of the form $\mathrm{diag}(1, \ldots, 1, -1, \ldots, -1, 0, \ldots, 0)$. This diagonalization immediately implies that $A^3 = A = A^*$ holds for all orthogonal $k$-matrices. 
For this reason, the study of the class of all orthogonal tripotent matrices holds particular significance, and it will constitute the main subject of this paper. \\
\noindent  Standardly, the set of $m \times n$ complex matrices is denoted by $\mathbb{C}^{m \times n}$. 
The symbols $\mathcal{R}(A)$ and $\operatorname{r}(A)$ stand for range and rank of $A \in \mathbb{C}^{m \times n}$, whereas for $A \in \mathbb{C}^{n \times n}$, $\operatorname{tr}(A)$ denote the trace. 
And let $\mathbb{Z}$, $\mathbb{Z}^+$ and $\mathbb{C}$ denote the sets of integers, positive integers and complex numbers, respectively. 
Throughout this paper, we adopt the following notation 
\begin{eqnarray}\label{1.1}
\mathbb{C}_n^{3\text{-}OP} = \{ A \mid A \in \mathbb{C}^{n \times n}, A^3 = A = A^* \}.
\end{eqnarray}
Furthermore, $A^{\dagger} \in \mathbb{C}^{m \times n}$ 
is the Moore-Penrose inverse of $A \in \mathbb{C}^{m \times n}$, i.e.,  the unique solution to the equations(see \cite{Ben2003,Penrose,Cvetkovi,Gao2024}):
\begin{eqnarray*}
AA^{\dagger}A = A, \quad A^{\dagger}AA^{\dagger} = A^{\dagger}, \quad (AA^{\dagger})^{*} = AA^{\dagger}, \quad (A^{\dagger}A)^{*} = A^{\dagger}A.
\end{eqnarray*}
\noindent Finally, let $I_n$ be the $n \times n$ identity matrix, and $\mathbb{C}_n^U$ be the set of $n \times n$ unitary matrices. For $a \in \mathbb{C}$, $\operatorname{Re}(a)$ denotes its real part, and $i=\overline{1,r}$ stands for $i=1,2,\cdots,r$.

\noindent  The main purpose of this paper is to study the properties and characterizations of orthogonal tripotent matrices. Our main contributions can be summarized as follows:
\begin{itemize}
\item We derive fundamental properties of orthogonal tripotent matrices.
\item Using the Hartwig-Spindelb\"{o}ck decomposition of matrices and analyzing the mean equations of $A$, $A^{*}$, and $A^{\dagger}$, we characterize orthogonal tripotent matrices. 
Furthermore, we establish necessary and sufficient conditions for $A \in \mathbb{C}_{n}^{3\text{-}OP}$ through combined equations involving $AA^{*}$ and $A^{*}A$ with integer exponents.
\item Using matrix properties including tripotent, Hermitian and EP matrices, we  characterize orthogonal tripotent matrices.  
\end{itemize}

\noindent The paper is organized as follows: In Section 1, necessary lemmas and essential matrix classes are introduced. We devote Section 2 to establishing fundamental properties of orthogonal tripotent matrices. And several characterizations of orthogonal tripotent matrices are presented. Concluding remarks are given in Section 3.

\noindent To investigate orthogonal tripotent matrices, we first state the following lemmas.

    
\begin{lemma}\label{y1} {\rm\cite{Gao2024, Baksalary2022, Hartwig1983}}\
    Let $A\in\mathbb{C}^{n\times n}$ with $\operatorname{r}(A) = r$. Then there exists $U\in\mathbb{C}_{n}^{U}$ such that 
    \begin{equation}\label{2.1}
        A = U\begin{bmatrix}
            \Sigma K & \Sigma L\\
            0 & 0
        \end{bmatrix}U^{*},
    \end{equation}
    where $\Sigma=\operatorname{diag}(\sigma_1,\sigma_2,\ldots,\sigma_r)$ is the diagonal matrix of singular values of $A$, $\sigma_i > 0$ for $i = \overline{1,r}$, $K\in\mathbb{C}^{r\times r}$, $L\in\mathbb{C}^{r\times(n - r)}$, and
    \begin{equation}\label{2.2}
        KK^{*}+LL^{*}=I_r.
    \end{equation}
\end{lemma}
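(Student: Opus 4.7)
The proof rests on the singular value decomposition of $A$, rearranged as a unitary similarity. By the SVD there exist $W,V\in\mathbb{C}_n^U$ such that
\[
A=W\begin{bmatrix}\Sigma & 0\\ 0 & 0\end{bmatrix}V^{*},
\]
where $\Sigma=\operatorname{diag}(\sigma_{1},\dots,\sigma_{r})$ collects the $r$ positive singular values of $A$. The conceptual heart of the argument---and in my view the only real obstacle---is that \eqref{2.1} demands a single unitary on both sides, whereas the SVD naturally delivers two. Everything downstream is block multiplication.

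To bridge this gap I would insert $WW^{*}=I_n$ on the right of the SVD and set $U:=W$ and $T:=V^{*}W$. Since $V$ and $W$ are unitary, so is $T$. The factorization then reorganizes as
\[
A=W\begin{bmatrix}\Sigma & 0\\ 0 & 0\end{bmatrix}(V^{*}W)W^{*}=U\!\left(\begin{bmatrix}\Sigma & 0\\ 0 & 0\end{bmatrix}T\right)\!U^{*},
\]
which already has the outer shape required in \eqref{2.1}. Partitioning $T$ conformally with the block structure as
\[
T=\begin{bmatrix} K & L\\ T_{21} & T_{22}\end{bmatrix},\qquad K\in\mathbb{C}^{r\times r},\ L\in\mathbb{C}^{r\times(n-r)},
\]
and carrying out the multiplication by $\operatorname{diag}(\Sigma,0)$ zeros out the last $n-r$ rows and produces $\Sigma K$ and $\Sigma L$ in the top blocks, exactly as in \eqref{2.1}.

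It only remains to verify \eqref{2.2}, which I would obtain by reading off one block of the identity $TT^{*}=I_n$. The $(1,1)$ block of this relation is $KK^{*}+LL^{*}=I_r$, which is precisely \eqref{2.2}. No deeper machinery is required once the reshuffling $A=WDV^{*}=U(DT)U^{*}$ is in place; the rest is block algebra and the unitarity of $T$.
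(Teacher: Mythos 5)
Your argument is correct and is exactly the standard derivation of the Hartwig--Spindelb\"{o}ck decomposition: the paper itself gives no proof of Lemma~\ref{y1} (it only cites the literature), and the cited sources obtain it precisely as you do, by writing $A=WDV^{*}=W\bigl(D(V^{*}W)\bigr)W^{*}$, partitioning the unitary $T=V^{*}W$ conformally, and reading \eqref{2.2} off the $(1,1)$ block of $TT^{*}=I_n$. Nothing is missing.
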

   
\noindent The lemma above plays a crucial role in analyzing special matrix properties and in characterizing new classes of generalized inverses (see \cite{Baksalary2022, Khatri1980, Malik2025, Ferreyra, Sitha2023}).

\noindent  Using the above  decomposition of $A$ well-known as Hartwig-Spindelb\"{o}ck decomposition, $A^{\dagger}$ can be expressed as (see \cite{Maria2008}):
    \begin{eqnarray}\label{eMP}
    A^{\dagger} = U\begin{pmatrix}
    K^*\Sigma^{-1} & 0 \\
    L^*\Sigma^{-1} & 0
    \end{pmatrix}U^*.
    \end{eqnarray}

\begin{lemma}\label{y2}
Let $K \in \mathbb{C}^{r \times r}$ and $\Sigma = \text{diag}(\sigma_1, \sigma_2, \ldots, \sigma_r)$ be a diagonal matrix with $\sigma_i > 0$ for $i = \overline{1,r}$. Then $\Sigma K = K \Sigma$ if and only if one of the following conditions holds:
\begin{itemize}
    \item [$(a)$] $\Sigma^s K = K \Sigma^s$ for $s \in \mathbb{Z}\setminus\{0\}$;
    \item  [$(b)$]$(\Sigma^s + \Sigma^t) K = K (\Sigma^s + \Sigma^t)$ for $s,t \in \mathbb{Z}$ with $st \geq 0$ and $(s,t) \neq (0,0)$.
\end{itemize}
\end{lemma}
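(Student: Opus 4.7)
The plan is to handle each of (a) and (b) as a separate equivalence with the commutativity $\Sigma K=K\Sigma$, reducing everything to a one-variable injectivity argument on $(0,\infty)$. The central device is an entrywise reading: if $K=(k_{ij})$ and $P=\operatorname{diag}(p_1,\dots,p_r)$, the equation $PK=KP$ amounts to $(p_i-p_j)k_{ij}=0$ for all $i,j$. Thus $\Sigma K=K\Sigma$ is equivalent to $k_{ij}=0$ whenever $\sigma_i\neq\sigma_j$, and similar reductions will turn (a) and (b) into injectivity conditions on explicit functions of the $\sigma_i$.

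The forward directions are immediate. If $\Sigma K=K\Sigma$, then by induction $\Sigma^{n}K=K\Sigma^{n}$ for every $n\ge 0$; since each $\sigma_i>0$ the matrix $\Sigma$ is invertible, so the identity extends to negative powers by conjugating both sides with $\Sigma^{-1}$. Hence (a) holds for every nonzero $s$, and summing the identities for $s$ and $t$ gives (b). For the converse of (a), the hypothesis reads entrywise as $(\sigma_i^{s}-\sigma_j^{s})k_{ij}=0$, and since $s\neq 0$ the map $x\mapsto x^{s}$ is strictly monotone on $(0,\infty)$, hence injective; so $\sigma_i^{s}=\sigma_j^{s}$ forces $\sigma_i=\sigma_j$, which is exactly $\Sigma K=K\Sigma$.

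For the converse of (b), the same reduction yields $\bigl(\sigma_i^{s}+\sigma_i^{t}-\sigma_j^{s}-\sigma_j^{t}\bigr)k_{ij}=0$, and the task becomes the injectivity of $f(x)=x^{s}+x^{t}$ on $(0,\infty)$. The hypothesis $st\ge 0$ with $(s,t)\neq(0,0)$ splits into three sub-cases: both $s,t>0$, both $s,t<0$, or exactly one of $s,t$ is zero. In the first two, $f'(x)=sx^{s-1}+tx^{t-1}$ has constant strict sign and $f$ is strictly monotone; in the third, $f(x)$ reduces to $1+x^{t}$ with $t\neq 0$, again strictly monotone. Hence $f$ is always injective, and $\sigma_i=\sigma_j$ follows as before. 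The main (modest) obstacle is precisely this case analysis: verifying that $st\ge 0$ (rather than merely $st\neq 0$) is what prevents the two terms in $f$ from cancelling and producing a non-monotone function, while the exclusion of $(s,t)=(0,0)$ rules out the degenerate constant $f\equiv 2$.
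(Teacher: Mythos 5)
Your proposal is correct and follows essentially the same route as the paper: both reduce the matrix identity entrywise to showing that $\sigma_i\mapsto\sigma_i^{s}+\sigma_i^{t}$ (resp.\ $\sigma_i\mapsto\sigma_i^{s}$) is injective on $(0,\infty)$ under the stated sign conditions. The only difference is cosmetic --- you establish injectivity by a strict-monotonicity/derivative argument, whereas the paper factors $\sigma_i^{s}-\sigma_j^{s}+\sigma_i^{t}-\sigma_j^{t}$ as $(\sigma_i-\sigma_j)$ times a manifestly positive sum; both are valid.
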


\begin{proof}
The necessity is clear; we now prove sufficiency.  
Let $K = (k_{ij})_{r \times r}$. To show that $\Sigma K = K\Sigma$, it suffices to verify
\begin{equation}\label{2.4}
\sigma_i k_{ij} = k_{ij} \sigma_j \quad (i,j = \overline{1,r}). 
\end{equation}

\noindent $(a):$ Since $\Sigma^s K = K\Sigma^s$ holds if and only if $\Sigma^{-s} K = K\Sigma^{-s}$, we may assume $s > 0$.  
If $k_{ij} \neq 0$, then
\[
\sigma_i^s k_{ij} = k_{ij}\sigma_j^s \quad \Rightarrow \quad \sigma_i^s = \sigma_j^s.
\]
Since  $\sigma_i, \sigma_j > 0$, we obtain $\sigma_i = \sigma_j$, and thus (\ref{2.4}) holds. The case when $k_{ij}=0$ is the trivial one. Hence, $\Sigma K = K\Sigma$.

\noindent $(b):$ If $k_{ij} \neq 0$, then
\begin{equation}\label{2.5}
(\sigma_i^s+\sigma_i^t)k_{ij} = k_{ij}(\sigma_j^s+\sigma_j^t)
\quad \Rightarrow \quad \sigma_i^s - \sigma_j^s + \sigma_i^t - \sigma_j^t = 0. 
\end{equation}

\noindent For $s,t>0$, (\ref{2.5}) gives
\[
(\sigma_i - \sigma_j)\left(\sum_{p=0}^{s-1}\sigma_i^p\sigma_j^{s-1-p} + \sum_{p=0}^{t-1}\sigma_i^p\sigma_j^{t-1-p}\right)=0,
\]
and by the positive of $\sigma_i,\sigma_j$ we get  $\sigma_i = \sigma_j$, ensuring (\ref{2.4}).  

\noindent For $s,t< 0$, let $s=-m$, $t=-n$ with $m,n\in\mathbb{Z}^+$. Then (\ref{2.5}) becomes
\[
(\sigma_j-\sigma_i)\left(\sum_{p=0}^{m-1}\sigma_i^{p-m}\sigma_j^{-(p+1)} + \sum_{p=0}^{n-1}\sigma_i^{p-n}\sigma_j^{-(p+1)}\right)=0,
\]
and again by the positive of $\sigma_i, \sigma_j$ we get  $\sigma_i=\sigma_j$, yielding (\ref{2.4}).  The case when $k_{ij}=0$ is the trivial one. The case when $t=0$ or $s=0$ reduces to the case $(a)$. 
\noindent  Therefore, (\ref{2.4}) holds for all $s,t\in\mathbb{Z}$ with $st\geq 0$ and $(s,t)\neq(0,0)$. 
\end{proof}

\noindent The following general result holds: 

\begin{remark}\label{rrr}
     Let $ A \in \mathbb{C}^{r \times r} $, and let $\Sigma = \operatorname{diag}(\sigma_1, \sigma_2, \ldots, \sigma_r)$ be a diagonal matrix with $\sigma_i > 0$ for $i = \overline{1,r}$. 
     Then, for non-negative integers $s_1, s_2, \ldots, s_t$ not all zero,
$\left( \Sigma^{s_1} + \Sigma^{s_2} + \cdots + \Sigma^{s_t} \right) A = A \left( \Sigma^{s_1} + \Sigma^{s_2} + \cdots + \Sigma^{s_t} \right)$ holds if and only if $\Sigma A = A \Sigma$.
\end{remark}

\begin{remark}
\noindent The converse implication doesn't hold and that will be illustrated by the following example. Namely, we will show that in general the condition $(\Sigma^s + \Sigma^t)A = A(\Sigma^s + \Sigma^t)$ when $st<0$  does not necessarily imply $\Sigma A = A\Sigma$.
\end{remark}
\begin{example}\label{range}
Let $\Sigma = \operatorname{diag}\!\left(2, \tfrac{1}{2}\right)$ and let  $s = 3$, $t = -3$. Then
\[
\Sigma^3 + \Sigma^{-3} 
= \operatorname{diag}\!\left(2^3 + 2^{-3}, \left(\tfrac{1}{2}\right)^3 + \left(\tfrac{1}{2}\right)^{-3}\right) 
= \operatorname{diag}\!\left(\tfrac{65}{8}, \tfrac{65}{8}\right).
\]
Let $A=\begin{pmatrix}
0 & 1 \\
1 & 0
\end{pmatrix}. $ A direct computation shows
\[
(\Sigma^3 + \Sigma^{-3})A = A(\Sigma^3 + \Sigma^{-3}) 
= 
\begin{pmatrix}
0 & \tfrac{65}{8} \\
\tfrac{65}{8} & 0
\end{pmatrix}.
\]
However, $\Sigma A =
\begin{pmatrix}
0 & 2 \\
\tfrac{1}{2} & 0
\end{pmatrix} \neq  A \Sigma =
\begin{pmatrix}
0 & \tfrac{1}{2} \\
2 & 0
\end{pmatrix}. $
\end{example}

\noindent To facilitate subsequent discussion, we introduce several classes of special matrices. Denote by $\mathbb{C}_n^{H}$, $\mathbb{C}_n^{OP}$, $\mathbb{C}_n^{TM}$, $\mathbb{C}_n^N$, $\mathbb{C}_n^{EP}$, $\mathbb{C}_n^{MP}$, $\mathbb{C}_n^{SD}$, and $\mathbb{C}_n^{PI}$ the sets of Hermitian, orthogonal projection, tripotent, normal, EP, Moore-Penrose invertible, star-dagger, and partial isometry matrices, respectively, that is,
\begin{align*}
& \mathbb{C}_n^{H} = \{A\in\mathbb{C}^{n\times n}:A^* = A\},\\
&\mathbb{C}_n^{P} = \{A\in\mathbb{C}^{n\times n}:A^2 = A\},\\
&\mathbb{C}_n^{OP} = \{A\in\mathbb{C}^{n\times n}:A^2 = A^* = A\},\\
&\mathbb{C}_n^{TM}= \{A\in\mathbb{C}^{n\times n}:A^3 = A\}, \\
&\mathbb{C}_n^N = \{A \in \mathbb{C}^{n\times n} : AA^* = A^*A\},\\
&\mathbb{C}_n^{EP} = \{A \in \mathbb{C}^{n\times n} : AA^{\dagger} = A^{\dagger}A\} = \{A \in \mathbb{C}^{n\times n} :\mathcal{R} (A) = \mathcal{R} (A^*)\},\\
&\mathbb{C}_n^{MP}=\{A\in\mathbb{C}^{n\times n}:A^{\dagger} = A\},\\
&\mathbb{C}_n^{SD} = \{A \in \mathbb{C}^{n\times n} : A^{\dagger}A^* = A^*A^{\dagger}\},\\
&\mathbb{C}_n^{PI}=\{A\in\mathbb{C}^{n\times n}:A = AA^*A\} = \{A\in\mathbb{C}^{n\times n}:A^* = A^{\dagger}\}.
\end{align*}

\begin{lemma}\label{y5}
    Let $A$ be given by \textup{(\ref{2.1})}. The following statements hold:
  
\noindent  $(a)$ $A \in \mathbb{C}_n^{H} \Leftrightarrow L = 0, (\Sigma K)^* = \Sigma K$;\quad 
 $(b)$ $A \in \mathbb{C}_n^{MP} \Leftrightarrow L = 0, (\Sigma K)^2 = I_r$;\\
$(c)$ $A \in \mathbb{C}_n^{N} \Leftrightarrow L = 0,  K\Sigma = \Sigma K$;\quad 
$(d)$ $A \in \mathbb{C}_n^{TM} \Leftrightarrow (\Sigma K)^2 = I_r$;\\
$(e)$  $A \in \mathbb{C}_n^{EP} \Leftrightarrow L = 0$;\quad 
$(f)$  $A \in \mathbb{C}_n^{PI} \Leftrightarrow \Sigma = I_r$;\quad 
$(g)$  $A \in \mathbb{C}_n^{SD} \Leftrightarrow K\Sigma = \Sigma K$;\\
$(h)$ $A \in \mathbb{C}_{n}^{3\text{-}OP} \Leftrightarrow L = 0,\ (\Sigma K)^2 = I_r,\ (\Sigma K)^* = \Sigma K
    \Leftrightarrow L = 0,\ \Sigma = I_r,\ K^2= I_r
    \Leftrightarrow L = 0,\ \Sigma= I_r,\ K = K^*.$
 \end{lemma}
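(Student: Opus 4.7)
The plan is to observe that $\mathbb{C}_n^{3\text{-}OP} = \mathbb{C}_n^{H} \cap \mathbb{C}_n^{TM}$, and then to bootstrap the three characterizations from parts (a) and (d) of this very lemma, together with the structural identity $KK^*+LL^*=I_r$ coming from (\ref{2.2}). The first equivalence is then immediate: part (a) contributes $L=0$ together with $(\Sigma K)^* = \Sigma K$, while part (d) contributes $(\Sigma K)^2 = I_r$, and no further condition needs to be checked.

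For the second equivalence, the reverse implication is routine: if $L=0$ and $\Sigma=I_r$, then $(\Sigma K)^2 = K^2 = I_r$, and one only needs to verify $(\Sigma K)^* = K^* = K$, which follows from unitarity of $K$ (since $L=0$ forces $KK^* = I_r$ via (\ref{2.2})) together with $K^2=I_r$. The forward implication is the content-bearing step: from $(\Sigma K)^* = \Sigma K$ I would extract $K^*\Sigma = \Sigma K$ by using $\Sigma^* = \Sigma$; substituting this into $(\Sigma K)(\Sigma K) = I_r$ then yields $\Sigma K K^* \Sigma = I_r$, which collapses to $\Sigma^2 = I_r$ because $KK^*=I_r$, and positivity of the singular values forces $\Sigma = I_r$. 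The identity $(\Sigma K)^2 = I_r$ now reduces to $K^2 = I_r$.

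The third equivalence is a short argument once $L=0$ and $\Sigma = I_r$ are in hand: $K$ is unitary, so $K^2=I_r$ is equivalent to $K = K^{-1} = K^*$, and conversely $K=K^*$ together with $KK^*=I_r$ forces $K^2=I_r$.

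The main obstacle I anticipate is the bookkeeping in the forward direction of the second equivalence, where one must chain the Hermitian property $(\Sigma K)^* = \Sigma K$, the involution property $(\Sigma K)^2 = I_r$, and the structural identity $KK^*=I_r$ (available only because $L=0$ has already been established) in the correct order to isolate $\Sigma^2 = I_r$. Everything else amounts to algebraic repackaging of already established facts.
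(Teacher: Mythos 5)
Your argument for part $(h)$ is correct and follows essentially the same route as the paper: identify $\mathbb{C}_n^{3\text{-}OP}=\mathbb{C}_n^{H}\cap\mathbb{C}_n^{TM}$, pull $L=0$, $(\Sigma K)^*=\Sigma K$ and $(\Sigma K)^2=I_r$ from parts $(a)$ and $(d)$, then use $KK^*=I_r$ (forced by $L=0$ and \eqref{2.2}) to collapse $(\Sigma K)^2=\Sigma KK^*\Sigma=\Sigma^2=I_r$, whence $\Sigma=I_r$, $K^2=I_r$, and finally $K=K^{-1}=K^*$ by unitarity of $K$. Leaning on $(a)$ and $(d)$ without reproving them is also consistent with the paper, which declares $(a)$ immediate and defers $(c)$--$(g)$ to the literature.

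The gap is coverage: the lemma has eight parts, and besides $(h)$ the paper supplies a genuine proof of $(b)$, which your proposal does not touch. That part is not a formal repackaging: from $A^{\dagger}=A$ one must extract both $L=0$ and $(\Sigma K)^2=I_r$, and the route to $L=0$ is the only nonobvious step. The paper's argument is that $A^{\dagger}=A$ forces $A^2=AA^{\dagger}$ to be Hermitian, which in the decomposition \eqref{2.1} reads $K\Sigma L=0$, while $A=AA^{\dagger}A=A^3$ gives $(\Sigma K)^2\Sigma L=\Sigma L$; combining, $\Sigma L=\Sigma K\Sigma(K\Sigma L)=0$, so $L=0$, after which $KK^*=I_r$ makes $\Sigma K$ invertible and $(\Sigma K)^3=\Sigma K$ yields $(\Sigma K)^2=I_r$. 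Without some such argument (or an explicit citation for $(b)$), your proposal proves only one of the two parts the paper actually establishes.
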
    
\begin{proof} We prove conditions ($b$) and ($h$) only; condition ($a$) is immediate, and the remaining equivalences follow from \cite{Baksalary,Baksalary2,Zuo}.

\noindent ($b$)  Let  $A \in \mathbb{C}_n^{MP}$ be given by (\ref{2.1}). Then $A^2 \in \mathbb{C}_n^{H}$, which gives that $K\Sigma L=0$. And $A^3=I_r$ which implies that 
 $(\Sigma K)^3=\Sigma K$ and $(\Sigma K)^2\Sigma L=\Sigma L$. Evidently, $\Sigma L=\Sigma K\Sigma (K \Sigma L)=0$, i.e.,  $L=0$. This implies that $KK^*=I_r$ which together with $(\Sigma K)^3=\Sigma K$ gives that $(\Sigma K)^2=I_r$. Conversly, if we assume that $L = 0$ and  $(\Sigma K)^2 = I_r$, it is easy to verify that $A \in \mathbb{C}_n^{MP}$.

\noindent ($h$)   By (\ref{1.1}), $\mathbb{C}_n^{3\text{-}OP} = \mathbb{C}_n^{TM} \cap \mathbb{C}_n^{H}$.  
From ($a$) and ($d$), it follows that $A$ satisfies $A^3 = A = A^*$ if and only if
\[
L = 0, \quad (\Sigma K)^2 = I_r \quad \text{and} \quad K^*\Sigma = \Sigma K.
\]
Moreover, $(\Sigma K)^2 = \Sigma KK^*\Sigma = I_r$. Since $L = 0$ and (\ref{2.2}) holds, we obtain $\Sigma^2 = I_r$. As $\Sigma$ is diagonal with positive entries, it follows that $\Sigma = I_r$, and hence $K^2 = I_r$. Finally, $K$ satisfies $K = K^*$.
\end{proof}

\section{On the properties of orthogonal tripotent matrices}

The following elementary result present a fundamental structural property of orthogonal tripotent matrices.  

\begin{theorem}\label{zq} If $A$ is an orthogonal tripotent matrix, then there exists $U \in \mathbb{C}_n^{U}$ such that  
\begin{equation}\label{e111}
A = U \, \mathrm{diag}(\underbrace{1, \ldots, 1}_{p}, \underbrace{-1, \ldots, -1}_{q}, \underbrace{0, \ldots, 0}_{r}) U^*,
\end{equation}
for some nonnegative integers $p, q, r$ such that $p+q+r = n$.  
Conversely, any matrix of this form is an orthogonal tripotent matrix.  
\end{theorem}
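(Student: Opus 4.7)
The plan is to prove Theorem \ref{zq} by a direct application of the spectral theorem. Since $A^* = A$, the matrix $A$ is Hermitian and therefore unitarily diagonalizable: there exist $U \in \mathbb{C}_n^{U}$ and a real diagonal matrix $D = \operatorname{diag}(\lambda_1, \ldots, \lambda_n)$ with $A = U D U^*$. Substituting this factorization into $A^3 = A$ yields $U D^3 U^* = U D U^*$, hence $D^3 = D$. Reading this equation entrywise gives $\lambda_i^3 = \lambda_i$ for every $i$, and since each $\lambda_i$ is real, the only admissible values are $\lambda_i \in \{-1,0,1\}$. A permutation of the columns of $U$, which preserves unitarity, then rearranges the eigenvalues into the prescribed order, so that the diagonal factor takes exactly the block form displayed in (\ref{e111}) with $p, q, r$ the respective multiplicities.

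A second route avoiding the spectral theorem on $A$ itself proceeds via Lemma \ref{y5}(h). That lemma already reduces the Hartwig-Spindelb\"ock representation (\ref{2.1}) of an arbitrary $A \in \mathbb{C}_n^{3\text{-}OP}$ to the case $L = 0$, $\Sigma = I_r$, and $K \in \mathbb{C}^{r \times r}$ Hermitian with $K^2 = I_r$. It then remains only to diagonalize the Hermitian involution $K$: its eigenvalues necessarily lie in $\{+1,-1\}$, and absorbing the diagonalizing unitary into the outer factor $U$ via a block-diagonal unitary $\operatorname{diag}(V, I_{n-r})$ produces (\ref{e111}). The converse is a one-line verification: if $D$ is diagonal with entries in $\{0, \pm 1\}$, then $D = D^*$ and $D^3 = D$, and conjugation by any unitary preserves both identities.

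There is no substantive obstacle: the theorem is essentially a restatement of the spectral theorem specialized to the polynomial identity $\lambda^3 = \lambda$ on the reals. The only minor bookkeeping is to confirm that the column permutation (or, in the alternative approach, the block-diagonal unitary) used to assemble the final form remains unitary, which is automatic. I would present the first proof for brevity, and optionally remark that it recovers the signature data $(p,q,r)$ as the numbers of positive, negative, and zero eigenvalues of $A$.
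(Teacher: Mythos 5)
Your first route—spectral theorem for the Hermitian matrix $A$, then $D^3=D$ forcing eigenvalues into $\{0,1,-1\}$, followed by a column permutation of $U$—is exactly the argument the paper gives, and your one-line verification of the converse matches as well. The proposal is correct and essentially identical to the paper's proof; the alternative route via Lemma \ref{y5}(h) is a valid but unnecessary detour that you rightly set aside.
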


\begin{proof}  Let $A \in \mathbb{C}_{n}^{3\text{-}OP}$. Since $A\in \mathbb{C}_n^{H}$, all eigenvalues of $A$ are real and $A$ is diagonalizable.  Moreover, because  of $A^3 = A$, for any eigenvalues $\lambda$ of $A$, we have that $\lambda^3=\lambda$, i.e., $\lambda \in \{0, 1, -1\}$. Thus, there exists $U \in \mathbb{C}_n^{U}$ such that $A$ is given by $(\ref{e111})$ for some nonnegative integers $p, q, r$ with $p+q+r = n$.  

\noindent Conversely, any matrix of the above form clearly satisfies $A^* = A$ and $A^3 = A$, and hence belongs to $\mathbb{C}_n^{3\text{-}OP}$ by (\ref{1.1}).
\end{proof}

\begin{remark}
\noindent Similarly as in Theorem~\textup{\ref{zq}}, we can extend the characterization of orthogonal $k$-matrix for natural number $k \geq 2$:\\
  $(a)$ when $k$ is even, 
    \[
        A^k = A = A^* \Leftrightarrow A^2 = A = A^*;
    \]
     $(b)$  when $k$ is odd,
    \[
        A^k = A = A^* \Leftrightarrow A^3 = A = A^*.
    \]
\end{remark}


\begin{theorem}\label{t3}
A matrix $A \in \mathbb{C}^{n \times n}$ belongs to $\mathbb{C}_{n}^{3\text{-}OP}$ if and only if 
\[
A = A^* = A^\dagger.
\]
\end{theorem}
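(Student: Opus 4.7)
The plan is to prove both directions by direct appeal to the four Penrose equations that uniquely characterize $A^{\dagger}$, avoiding the need to invoke the spectral decomposition from Theorem~\ref{zq}. The key observation is that the hypothesis $A^{3}=A=A^{*}$ is essentially tailor-made to make $X=A$ itself a solution of the Penrose system, which by uniqueness forces $A^{\dagger}=A$.

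For the forward direction, I would assume $A\in\mathbb{C}_{n}^{3\text{-}OP}$, so $A=A^{*}$ is already part of the conclusion, and only $A^{\dagger}=A$ needs justification. I would verify the four defining equations of $A^{\dagger}$ with $X=A$: equation $AXA=A$ reduces to $A^{3}=A$, which is given; equation $XAX=X$ also reduces to $A^{3}=A$; and equations $(AX)^{*}=AX$ and $(XA)^{*}=XA$ both reduce to $(A^{2})^{*}=A^{2}$, which holds because $(A^{2})^{*}=(A^{*})^{2}=A^{2}$ by Hermiticity. By uniqueness of the Moore-Penrose inverse, $A^{\dagger}=A$.

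For the converse, I would assume $A=A^{*}=A^{\dagger}$. Then $A=A^{*}$ already holds, and applying the first Penrose equation $AA^{\dagger}A=A$ with $A^{\dagger}$ replaced by $A$ yields $A^{3}=A$ immediately. Thus $A^{3}=A=A^{*}$, so $A\in\mathbb{C}_{n}^{3\text{-}OP}$ by~(\ref{1.1}).

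There is essentially no obstacle here beyond invoking the uniqueness of $A^{\dagger}$; the entire proof is a short verification. If one preferred a more conceptual argument, the forward direction could alternatively be handled by writing $A$ in the diagonal form~(\ref{e111}) provided by Theorem~\ref{zq} and observing that a real diagonal matrix with entries in $\{-1,0,1\}$ is manifestly its own Moore-Penrose inverse, but the Penrose-equation verification is shorter and self-contained.
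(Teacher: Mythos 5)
Your proof is correct. The converse direction coincides exactly with the paper's argument (substituting $A^{\dagger}=A$ into $AA^{\dagger}A=A$ to get $A^{3}=A$), but your forward direction takes a genuinely different route: the paper first invokes Theorem~\ref{zq} to write $A$ in the unitarily diagonalized form~(\ref{e111}) with entries in $\{1,-1,0\}$ and then reads off $A=A^{*}=A^{\dagger}$ from that normal form, whereas you bypass the spectral decomposition entirely and verify the four Penrose equations for $X=A$ directly, concluding $A^{\dagger}=A$ by uniqueness. Your verification is sound: $AXA=A$ and $XAX=X$ both reduce to $A^{3}=A$, and $(AX)^{*}=AX$, $(XA)^{*}=XA$ reduce to $(A^{2})^{*}=(A^{*})^{2}=A^{2}$, which follows from Hermiticity. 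What your approach buys is self-containment and minimal machinery --- it needs only the defining property of $A^{\dagger}$ stated in the introduction, not the structural Theorem~\ref{zq}; what the paper's approach buys is that the spectral form~(\ref{e111}) has already been established and is reused throughout the paper, so citing it costs nothing in context and makes the claim visually immediate. Either argument is acceptable; yours is arguably the cleaner one if the theorem were to stand alone.
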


\begin{proof}
$(\Rightarrow)$ Let $A \in \mathbb{C}_{n}^{3\text{-}OP}$. Then $A = A^*=A^3$.  
By Theorem~\ref{zq}, $A$ is given by $(\ref{e111})$, so evidently  $A = A^* = A^\dagger$. 

\noindent $(\Leftarrow)$ Conversely, assume that $A = A^* = A^\dagger$.  Then $A=AA^\dagger A=AAA=A^3$, so $A \in \mathbb{C}_{n}^{3\text{-}OP}$.
\end{proof}


\noindent  The next theorem characterizes the relationship between orthogonal tripotent matrices and various fundamental classes of matrices.

\begin{theorem}\label{ze}
    The following  hold:

\noindent $(a)$ $\mathbb{C}_n^{3\text{-}OP} = \mathbb{C}_n^{3\text{-}P} \cap \mathbb{C}_n^{N}$; \quad 
      $(b)$ $\mathbb{C}_n^{3\text{-}OP} = \mathbb{C}_n^{H} \cap \mathbb{C}_n^{PI}$;\quad  $(c)$ $\mathbb{C}_n^{3\text{-}OP} = \mathbb{C}_n^{H} \cap \mathbb{C}_n^{MP}$;\\
      $(d)$ $\mathbb{C}_n^{3\text{-}OP} = \mathbb{C}_n^{TM} \cap \mathbb{C}_n^{EP} \cap \mathbb{C}_n^{PI}$;   \quad 
  $(e)$ $\mathbb{C}_n^{3\text{-}OP} = \mathbb{C}_n^{TM} \cap \mathbb{C}_n^{EP} \cap \mathbb{C}_n^{SD}$; \\ 
$(f)$ $\mathbb{C}_n^{3\text{-}OP} = \mathbb{C}_n^{MP} \cap \mathbb{C}_n^{SD}$;   \quad 
$(g)$ $\mathbb{C}_n^{3\text{-}OP} = \mathbb{C}_n^{MP} \cap \mathbb{C}_n^{PI}$.

\end{theorem}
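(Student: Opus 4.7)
The overall plan is to use the Hartwig-Spindelb\"ock decomposition (\ref{2.1}) as a uniform tool: every matrix class appearing on the right-hand side of (a)--(g) has been translated in Lemma~\ref{y5} into explicit algebraic conditions on the triple $(\Sigma, K, L)$. For each of the seven identities, my plan is to combine the relevant conditions and reduce them to $L = 0$, $\Sigma = I_r$, and $K = K^*$; by Lemma~\ref{y5}(h) this places $A$ in $\mathbb{C}_n^{3\text{-}OP}$. The reverse inclusions are immediate: Theorem~\ref{zq} diagonalizes any $A \in \mathbb{C}_n^{3\text{-}OP}$ unitarily with entries in $\{0, 1, -1\}$, and such a matrix trivially lies in every class in sight.

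Parts (b), (d), and (g) are the quickest, since each explicitly includes the $\mathrm{PI}$ condition $\Sigma = I_r$. In (b) the Hermitian factor supplies $L = 0$ and $(\Sigma K)^* = \Sigma K$, hence $K^* = K$; in (d) and (g) the tripotent (respectively Moore-Penrose) condition $(\Sigma K)^2 = I_r$ collapses to $K^2 = I_r$, while $L = 0$ together with (\ref{2.2}) gives $KK^* = I_r$, so $K^* = K^{-1} = K$. For (c) the key one-line identity is
\[
I_r = (\Sigma K)^2 = (\Sigma K)(\Sigma K)^* = \Sigma(KK^*)\Sigma = \Sigma^2,
\]
using in turn the Moore-Penrose condition, Hermiticity of $\Sigma K$, and (\ref{2.2}) with $L=0$; positivity of $\Sigma$ then forces $\Sigma = I_r$, and $K = K^*$ follows.

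The main obstacle lies in parts (a), (e), and (f), which are in fact algebraically equivalent: $\mathbb{C}_n^N$ corresponds via Lemma~\ref{y5} to $L = 0$ and $K\Sigma = \Sigma K$, i.e.\ $\mathbb{C}_n^{EP} \cap \mathbb{C}_n^{SD}$, while $\mathbb{C}_n^{MP} = \mathbb{C}_n^{TM} \cap \mathbb{C}_n^{EP}$, so all three intersections collapse to the joint conditions $L = 0$, $K\Sigma = \Sigma K$, and $(\Sigma K)^2 = I_r$. I plan to argue that commutativity gives $(\Sigma K)^2 = \Sigma^2 K^2 = I_r$, so $K^2 = \Sigma^{-2}$; but $L = 0$ and (\ref{2.2}) make $K$ unitary, hence $K^2$ unitary, while $\Sigma^{-2}$ is a strictly positive real diagonal matrix. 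A matrix that is simultaneously unitary and a positive real diagonal must equal $I_r$, so $\Sigma = I_r$ and $K^2 = I_r$, whence $K^* = K^{-1} = K$. For part (a) alone a spectral shortcut is available --- a normal tripotent matrix is unitarily diagonalizable with eigenvalues in $\{0, 1, -1\}$ and hence Hermitian --- but the uniform Hartwig-Spindelb\"ock reduction is cleaner across the whole theorem.
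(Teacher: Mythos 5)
Your proposal is correct and follows essentially the same route as the paper: both reduce every class to $(\Sigma,K,L)$-conditions via Lemma~\ref{y5}, use the identities $\mathbb{C}_n^{N}=\mathbb{C}_n^{EP}\cap\mathbb{C}_n^{SD}$ and $\mathbb{C}_n^{MP}=\mathbb{C}_n^{TM}\cap\mathbb{C}_n^{EP}$ to collapse (e)--(g) onto (a)--(d), and extract $\Sigma=I_r$, $K=K^*$ from $KK^*=I_r$. The only cosmetic difference is in part (a), where you conclude from ``$K^2=\Sigma^{-2}$ is both unitary and positive diagonal'' while the paper computes $\Sigma^{-4}=K^2(K^*)^2=I_r$; these are the same argument.
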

\begin{proof}
  Noting that $\mathbb{C}_n^{N} = \mathbb{C}_n^{EP} \cap \mathbb{C}_n^{SD}$ and 
$\mathbb{C}_n^{MP} = \mathbb{C}_n^{TM} \cap \mathbb{C}_n^{EP}$ by Lemma \ref{y5}, we see that $(e)$ follows directly from $(a)$, and $(f)$--$(g)$ follow directly from $(d)$--$(e)$. 
Hence, it suffices to establish $(a)$--$(d)$.\\
\noindent $(a):$ The inclusion $\mathbb{C}_n^{3\text{-}OP} \subset \mathbb{C}_n^{TM} \cap \mathbb{C}_n^N$ follows immediately from Lemma~\ref{y5}. 
    Conversely, suppose $A \in \mathbb{C}_n^{TM} \cap \mathbb{C}_n^N$. By Lemma \ref{y5}, we obtain
    \begin{eqnarray*}
        (\Sigma K)^2 = I_r, \quad L = 0 \quad \text{and} \quad K\Sigma = \Sigma K.
    \end{eqnarray*}
    From the first and third relations, we deduce 
    \[
        (\Sigma K)^2 = \Sigma KK \Sigma = I_r,
    \]
    which implies $K^2 = \Sigma^{-2}$. Combining this with $L = 0$ yields 
    \begin{eqnarray*}
    \Sigma^{-4} = K^2 (K^*)^2 = I_r.
    \end{eqnarray*}  
   Consequently, we obtain $\Sigma = I_r$, which upon substitution into $(\Sigma K)^2 = I_r$ yields $K^2 = I_r$. 
By Lemma~\ref{y5}, it follows that $A \in \mathbb{C}_n^{3\text{-}OP}$. \\
\noindent $(b):$ This assertion follows directly from Lemma \ref{y5}.\\
\noindent $(c):$ It is clear that $\mathbb{C}_n^{3\text{-}OP} \subset \mathbb{C}_n^{H} \cap \mathbb{C}_n^{MP}$. 
Conversely, let $A \in \mathbb{C}_n^{H} \cap \mathbb{C}_n^{MP}$. 
By Lemma~\ref{y5}, we obtain
\[
L = 0, \quad (\Sigma K)^* = \Sigma K \quad \text{and} \quad (\Sigma K)^2 = I_r,
\]
which together imply $(\Sigma K)^2 = \Sigma K K^* \Sigma = I_r$. 
Combining this with $L = 0$ and (\ref{2.2}), a direct computation shows that $\Sigma = I_r$. 
Thus, by Lemma \ref{y5}, we have $A \in \mathbb{C}_n^{3\text{-}OP}$.\\
\noindent $(d):$ Clearly, $\mathbb{C}_n^{3\text{-}OP} \subset \mathbb{C}_n^{TM} \cap \mathbb{C}_n^{EP} \cap \mathbb{C}_n^{PI}$. 
Conversely, let $A \in \mathbb{C}_n^{TM} \cap \mathbb{C}_n^{EP} \cap \mathbb{C}_n^{PI}$. 
By Lemma \ref{y5}, we have
\[
(\Sigma K)^2 = I_r, \quad L = 0 \quad \text{and} \quad \Sigma = I_r.
\]
These relations imply $K^2 = I_r$. Then Lemma \ref{y5} yields $A \in \mathbb{C}_n^{3\text{-}OP}$.
\end{proof}
  


\begin{theorem}\label{zr}
   $A$ is an orthogonal tripotent matrix if and only if it satisfies one of the following equivalent conditions:\\
     $(a)$  $A = E - F$, where $E, F \in \mathbb{C}_n^{OP}$ and $EF = FE$;\\
     $(b)$  $A = H + L$, where $H$ and $L$ are in $\mathbb{C}_n^{3 - OP}$ and $HL = LH = 0$.
\end{theorem}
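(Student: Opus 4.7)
The plan is to reduce both parts to the spectral decomposition provided by Theorem~\ref{zq}, which writes any $A\in\mathbb{C}_n^{3\text{-}OP}$ in the form $A=U\operatorname{diag}(I_p,-I_q,0)U^*$ for some unitary $U$. This decomposition supplies concrete witnesses in each forward direction, while the converses reduce to short algebraic manipulations based on commutativity (for $(a)$) and on annihilation (for $(b)$).

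For the forward direction of $(a)$, I would take $E:=U\operatorname{diag}(I_p,0,0)U^*$ and $F:=U\operatorname{diag}(0,I_q,0)U^*$, i.e., the spectral projections onto the $+1$ and $-1$ eigenspaces of $A$. Both clearly belong to $\mathbb{C}_n^{OP}$, we have $EF=FE=0$ (so in particular they commute), and $E-F=A$ by construction. For the converse, $A^*=A$ is immediate from Hermiticity of $E$ and $F$. For $A^3=A$, I would first get $(E-F)^2 = E + F - 2EF$ using $E^2=E$, $F^2=F$ and $EF=FE$, and then expand $(E-F)^3=(E-F)(E+F-2EF)$. Every mixed product of the form $E^2F$, $EFE$, $FEF$, $EF^2$ collapses to $EF$ by idempotency and commutativity, and the coefficients balance to leave $(E-F)^3=E-F=A$.

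For the forward direction of $(b)$, I would set $H:=U\operatorname{diag}(I_p,0,0)U^*$ and $L:=U\operatorname{diag}(0,-I_q,0)U^*$. Then $H\in\mathbb{C}_n^{OP}\subset\mathbb{C}_n^{3\text{-}OP}$, while for $L$ a direct block-diagonal check gives $L^*=L$ and $L^3=L$, so $L\in\mathbb{C}_n^{3\text{-}OP}$; disjointness of the block supports forces $HL=LH=0$, and $H+L=A$. For the converse, $A^*=A$ is immediate from Hermiticity of $H$ and $L$. Since $HL=LH=0$, every cross term in the expansion of $(H+L)^3$ contains a factor of $HL$ or $LH$ and therefore vanishes, yielding $(H+L)^3=H^3+L^3=H+L=A$, which places $A$ in $\mathbb{C}_n^{3\text{-}OP}$.

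The only step requiring genuine care is the expansion of $(E-F)^3$ in the converse of $(a)$, because only commutativity (not annihilation) of $E$ and $F$ is assumed, so the eight mixed terms must actually be tracked and simplified. All remaining steps are either direct invocations of Theorem~\ref{zq} or one-line block-diagonal checks, so I do not anticipate any structural obstacle.
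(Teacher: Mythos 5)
Your proposal is correct, but it does not follow the paper's route, and the difference is worth noting. The paper disposes of necessity in both items with the single identity $A = A - 0 = A + 0$. For item $(b)$ this works: $H=A$, $L=0$ are both in $\mathbb{C}_n^{3\text{-}OP}$ with $HL=LH=0$. For item $(a)$, however, it does not: taking $E=A$, $F=0$ would require $A\in\mathbb{C}_n^{OP}$, i.e.\ $A^2=A$, which fails for any orthogonal tripotent matrix with a $-1$ eigenvalue. Your construction via the spectral projections $E=U\operatorname{diag}(I_p,0,0)U^*$ and $F=U\operatorname{diag}(0,I_q,0)U^*$ from Theorem~\ref{zq} is the correct way to establish necessity in $(a)$, and it also gives a more informative witness for $(b)$ (with $HL=LH=0$ holding for genuinely nontrivial $H$ and $L$). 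Your sufficiency arguments match what the paper calls ``directly by computation'': the expansion $(E-F)^3=E-F$ using only idempotency and commutativity, which you rightly flag as the one place where the terms must actually be tracked, and the annihilation argument $(H+L)^3=H^3+L^3$. In short, your proof is complete where the paper's is not only terser but, for the necessity of $(a)$, actually gives an argument that does not go through as stated.
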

\begin{proof}  The necessity in both items follows directly from the identity $A = A - 0 = A + 0$,  while the sufficiency follows directly by computation.  
\end{proof}


\noindent  In the next theorem, we derive a characterization of an orthogonal tripotent matrix by means of singular value decomposition.

\begin{theorem}\label{zt} Let $ A \in \mathbb{C}^{n \times n}$ and $ r=\operatorname{r}(A) $. Then $ A \in \mathbb{C}_{n}^{3\text{-}OP} $ if and only if there exist $ U_1, V_1 \in \mathbb{C}^{n \times r} $ such that  
$$
 U_1^*U_1 = V_1^*V_1 = I_r, \quad A = U_1V_1^* = V_1U_1^* \quad \text{and} \quad (U_1^*V_1)^2 = I_r.
$$
\end{theorem}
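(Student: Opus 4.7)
My plan is to prove the two implications separately, with the spectral decomposition of Theorem~\ref{zq} doing all the real work in the converse direction. The sufficiency direction is a short algebraic check, while the necessity direction reduces to guessing the right $U_1$ and $V_1$.

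For sufficiency, I would start from the two identities $A=U_1V_1^*=V_1U_1^*$: taking the adjoint of the first and comparing with the second immediately yields $A=A^*$. Next, observing that $(U_1^*V_1)^2=I_r$ is equivalent, upon conjugate transposition, to $(V_1^*U_1)^2=I_r$, I would expand
\[
A^3=(U_1V_1^*)(U_1V_1^*)(U_1V_1^*)=U_1(V_1^*U_1)^2V_1^*=U_1V_1^*=A,
\]
so that $A\in\mathbb{C}_n^{3\text{-}OP}$.

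For necessity, Theorem~\ref{zq} supplies $U\in\mathbb{C}_n^U$ and nonnegative integers $p,q$ with $p+q=r$ such that $A=U\,\mathrm{diag}(I_p,-I_q,0)\,U^*$. I would take $U_1\in\mathbb{C}^{n\times r}$ to be the first $r$ columns of $U$ and set $V_1:=U_1 J$, where $J:=\mathrm{diag}(I_p,-I_q)\in\mathbb{C}^{r\times r}$. Since $J$ is a self-adjoint involution, the required relations $U_1^*U_1=I_r$, $V_1^*V_1=J^*J=I_r$, $A=U_1JU_1^*=U_1V_1^*=V_1U_1^*$, and $(U_1^*V_1)^2=J^2=I_r$ all reduce to the single identity $J^2=I_r$.

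The only mild obstacle is spotting the right choice of $V_1$ in the converse: once one absorbs the sign matrix $J$ into $V_1$ rather than keeping it as an external middle factor, every required identity becomes immediate, and no further computation is needed.
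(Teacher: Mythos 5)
Your proof is correct. The sufficiency direction coincides with the paper's (a direct verification; your expansion $A^3=U_1(V_1^*U_1)^2V_1^*$ and the observation that $(V_1^*U_1)^2=I_r$ is the adjoint of the hypothesis are exactly the computations the paper leaves to the reader). The necessity direction, however, takes a genuinely different route. The paper starts from an arbitrary singular value decomposition $A=U_1\Sigma V_1^*$ (with $U_1$, $V_1$ the first $r$ columns of the two unitary factors) and uses $A^3=A=A^*$ to force $\Sigma^3=\Sigma$, hence $\Sigma=I_r$, after which the three required identities drop out. You instead invoke the spectral form of Theorem~\ref{zq}, $A=U\operatorname{diag}(I_p,-I_q,0)U^*$, and explicitly construct $U_1$ as the first $r$ columns of $U$ and $V_1=U_1J$ with $J=\operatorname{diag}(I_p,-I_q)$, reducing every condition to $J^2=I_r$. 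Your argument is shorter and fully explicit about what $U_1$ and $V_1$ are, at the cost of relying on the previously established diagonalization; the paper's argument is self-contained relative to the SVD and shows that \emph{any} choice of singular-vector blocks works, since $\Sigma=I_r$ is forced. Both are complete and correct.
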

\begin{proof} 
\noindent  \textbf{($\Leftarrow$)} The result follows by direct verification.\\

\noindent \textbf{($\Rightarrow$)}
Let the singular value decomposition of $ A $ be
$$
A = U \begin{pmatrix}
\Sigma & 0 \\
0 & 0
\end{pmatrix} V^{*},
$$
where $ \Sigma = \operatorname{diag}(\sigma_{1},\ldots,\sigma_{r}) $ with $ \sigma_{i} > 0 $, $ \sigma_{i} \in \mathbb{R} $, $i=\overline{1,r}$ and $ U, V \in \mathbb{C}_{n}^{U} $.  
Let $$U = 
\begin{bmatrix}
U_1 & U_2
\end{bmatrix} \text{ and } 
V =
\begin{bmatrix}
V_1^* \\
V_2^*
\end{bmatrix},$$
where $U_1, V_1 \in \mathbb{C}^{n \times r}$. By $U^*U=VV^*=I_n$, we have  $U_1^*U_1 = V_1^*V_1 = I_r $. Also  $A  =U_1\Sigma V_1^*$. Since $A^* = A $, we have $V_1 \Sigma U_1^* = U_1 \Sigma V_1^* $. From $A^3 = A $, it follows that
\begin{eqnarray*} 
U_1\Sigma V_1^*V_1 \Sigma U_1^*U_1\Sigma V_1^* = U_1\Sigma V_1^* ,
\end{eqnarray*} 
i.e., $U_1\Sigma^3 V_1^* = U_1\Sigma V_1^* $. Therefore, $\Sigma^3 = \Sigma $ which implies that $\Sigma = I_r $. Thus  $A = U_1V_1^* = V_1U_1^* $ and $(U_1V_1^*)^3=U_1V_1^*$, i.e., $(U_1^*V_1)^2 = I_r $.
\end{proof}

\noindent Building on Theorem~\ref{t3}, we consider that orthogonal tripotent matrices can be characterized through averages of $A$, $A^*$, and $A^\dagger$.

\begin{theorem}\label{za}  Let $ A \in \mathbb{C}^{n \times n}$. The following are equivalent: \\
    $(a)$   $A$ is an orthogonal tripotent matrix;\\
     $(b)$  $\frac{1}{3}(A + A^{*}+A^{\dagger}) = A$;\\
     $(c)$  $\frac{1}{3}(A + A^{*}+A^{\dagger}) = A^{\dagger}$.
\end{theorem}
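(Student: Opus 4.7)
The plan is to split the equivalence into the two forward implications from (a), which I handle together, and the two reverse implications $(c) \Rightarrow (a)$ and $(b) \Rightarrow (a)$, which have quite different flavours. For $(a) \Rightarrow (b)$ and $(a) \Rightarrow (c)$ I would simply invoke Theorem~\ref{t3}: an orthogonal tripotent matrix satisfies $A = A^* = A^\dagger$, so all three summands in $\tfrac{1}{3}(A + A^* + A^\dagger)$ coincide, and the average equals both $A$ and $A^\dagger$ at once.

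For $(c) \Rightarrow (a)$ the argument should be short and purely operator-theoretic. I would rewrite (c) as $A + A^* = 2A^\dagger$; the left-hand side is visibly Hermitian, so $A^\dagger$ must also be Hermitian. Using the standard identity $(A^\dagger)^* = (A^*)^\dagger$, this yields $A^\dagger = (A^*)^\dagger$, and applying the (involutive) Moore--Penrose inverse to both sides produces $A = A^*$. Substituting back into (c) then forces $A = A^\dagger$, and Theorem~\ref{t3} places $A$ in $\mathbb{C}_n^{3\text{-}OP}$.

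For $(b) \Rightarrow (a)$ I will use the Hartwig--Spindelb\"{o}ck decomposition~(\ref{2.1}) together with the formula~(\ref{eMP}) for $A^\dagger$. Plugging the block expressions into $A^* + A^\dagger = 2A$, the off-diagonal blocks give $L^*(\Sigma + \Sigma^{-1}) = 0$; since $\Sigma + \Sigma^{-1}$ is diagonal with positive entries, this forces $L = 0$, and then~(\ref{2.2}) implies $KK^* = I_r$, so $K$ is unitary. The $(1,1)$ block becomes $K^*\Sigma + K^*\Sigma^{-1} = 2\Sigma K$, and left-multiplication by $K$ rearranges this to $K\Sigma K = (\Sigma + \Sigma^{-1})/2$.

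The main obstacle is extracting $\Sigma = I_r$ from this last identity. My plan is to compare singular values: since $K$ is unitary, the singular values of $K\Sigma K$ are exactly $\sigma_1, \ldots, \sigma_r$, whereas the right-hand side is diagonal positive with entries $\lambda_i := (\sigma_i + \sigma_i^{-1})/2 \geq 1$ by AM--GM. Multiset equality therefore forces every $\sigma_i \geq 1$, but for $\sigma \geq 1$ one has $\lambda = (\sigma + \sigma^{-1})/2 \leq \sigma$, with equality iff $\sigma = 1$. Summing the multiset identity $\sum \sigma_i = \sum \lambda_i$ against the termwise inequality $\lambda_i \leq \sigma_i$ collapses all these inequalities, giving $\sigma_i = 1$ for each $i$. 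Hence $\Sigma = I_r$, whereupon $K^2 = I_r$, and Lemma~\ref{y5}(h) concludes that $A \in \mathbb{C}_n^{3\text{-}OP}$.
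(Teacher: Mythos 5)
Your proof is correct, and the two reverse implications are handled by genuinely different arguments than the paper's. For $(c)\Rightarrow(a)$ the paper runs the same Hartwig--Spindelb\"{o}ck computation as for $(b)$: it derives $\Sigma K = K^*(2\Sigma^{-1}-\Sigma)$, invokes Lemma~\ref{y2} to commute $K$ with $\Sigma$, and then argues via the eigenvalues of the unitary $K$; your observation that $2A^{\dagger}=A+A^*$ is Hermitian, so that $A^{\dagger}=(A^{\dagger})^*=(A^*)^{\dagger}$, whence $A=A^*$ and then $A=A^{\dagger}$ by substitution, is much shorter and bypasses the decomposition entirely, finishing with Theorem~\ref{t3}. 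For $(b)\Rightarrow(a)$ you and the paper both reduce to $L=0$, $KK^*=I_r$ and the block identity $K^*(\Sigma+\Sigma^{-1})=2\Sigma K$, but the finishes differ: the paper derives $K(\Sigma^2+I_r)=(\Sigma^2+I_r)K$, applies Lemma~\ref{y2} to get $K\Sigma=\Sigma K$, obtains $K^2=\tfrac{1}{2}(I_r+\Sigma^{-2})$, and compares moduli of eigenvalues of the unitary $K$; you instead rewrite the identity as $K\Sigma K=\tfrac{1}{2}(\Sigma+\Sigma^{-1})$ and compare singular-value multisets, using AM--GM to force every $\sigma_i\geq 1$ and then summing the termwise inequality $\tfrac{1}{2}(\sigma_i+\sigma_i^{-1})\leq\sigma_i$ against the equality of sums to collapse everything to $\sigma_i=1$. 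Both finishes are sound; yours trades Lemma~\ref{y2} and the spectral decomposition of $K$ for the (standard) unitary invariance of singular values, and your treatment of $(c)$ is a genuine simplification of the paper's.
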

\begin{proof}  We have that $(a)\Rightarrow (b)$ and  $(a)\Rightarrow (c)$   follow by Theorem \ref{t3}. 

\noindent $(b)\Rightarrow(a): $     We have that  $A^* + A^{\dagger}=2A$. If we assume that $A$ is given by $(\ref{2.1})$, then by $(\ref{eMP})$ we have that 
        \begin{equation}\label{4.1}
            L = 0 \quad \text{and} \quad K^*(\Sigma+\Sigma^{-1}) = 2\Sigma K .
        \end{equation}
        Thus, $2\Sigma K K^*(\Sigma+\Sigma^{-1}) = 2K^*(\Sigma+\Sigma^{-1}) \Sigma K $, i.e.,  $
            K(\Sigma^2 + I_r)=(\Sigma^2 + I_r)K.$ 
        By Lemma~\ref{y2}, we have $K\Sigma=\Sigma K$, so by (\ref{4.1}), we get  
        \begin{equation}\label{4.2}
            K^2=\frac{1}{2}(I_r+\Sigma^{-2}).
        \end{equation}
        Since $K\in \mathbb{C}_r^U$ ($KK^*= I_r$), we have that $K$ can be diagonalized as
        \begin{eqnarray*}
            K = V \text{diag}(k_1,k_2,\cdots,k_r) V^*,
        \end{eqnarray*}
        where $V \in \mathbb{C}_r^U$ and $k_j\in\mathbb{C}$ for $j = \overline{1,r}$ and $|k_j|^2 = 1$. Together with (\ref{4.2}), this gives
        $$
            \frac{1}{2}|1+\sigma_j^{-2}| = |k_j|^2 = 1,
        $$
        which implies $\sigma_j = 1$ for all $j$ and consequently $\Sigma = I_r$. Substituting $\Sigma = I_r$ into (\ref{4.2}) we have that  $K^2 = I_r$. Hence by Lemma~\ref{y5}  it follows that $A \in \mathbb{C}_n^{3\text{-}OP}$ .
        
\noindent $(c)\Rightarrow(a):$   We have that $A + A^{*}=2A^{\dagger}$. If we assume that $A$ is given by $(\ref{2.1})$, then by $(\ref{eMP})$ we have that $L = 0$ and $ \Sigma K+K^*\Sigma = 2K^*\Sigma^{-1}$. Hence, 
        \begin{equation}\label{4.3}
            \quad \Sigma K = K^*(2\Sigma^{-1} - \Sigma).
        \end{equation}
        Then, $\Sigma K K^*(2\Sigma^{-1} - \Sigma) = K^*(2\Sigma^{-1} - \Sigma) \Sigma K$, i.e.,  $K(2I_r - \Sigma^{2}) = (2I_r - \Sigma^{2})K$, 
        which gives $K\Sigma^{2}=\Sigma^{2}K$. Applying Lemma~\ref{y2}, $K\Sigma = \Sigma K$ holds, which together with (\ref{4.3}) implies that $K^2 = 2\Sigma^{-2} - I_r$.
       Similarly as in the part ($a$), we can prove that $\Sigma = I_r$, and (\ref{4.3}) gives $K^2 = I_r$. By Lemma~\ref{y5}, we conclude $A \in \mathbb{C}_n^{3\text{-}OP}$.
\end{proof}

\noindent Based on Theorem~{\ref{za}}, it is natural to ask whether the equation
$\frac{1}{3}(A + A^{*} + A^{\dagger}) = A^{*}$ is equivalent to the condition \(A \in \mathbb{C}_{n}^{3\text{-}OP}\). The following example shows that this equivalence does not hold in general.
\begin{example} \label{range}
Let
\begin{eqnarray*}
A = 
\begin{pmatrix}
    1 & 0 & 0 & 0 \\
    0 & -1 & 0 & 0 \\
    0 & 0 & \dfrac{1}{\sqrt{3}}i & 0 \\
    0 & 0 & 0 & -\dfrac{1}{\sqrt{3}}i
\end{pmatrix}.
\end{eqnarray*}
Then,
\begin{eqnarray*}
A^* = 
\begin{pmatrix}
    1 & 0 & 0 & 0 \\
    0 & -1 & 0 & 0 \\
    0 & 0 & -\dfrac{1}{\sqrt{3}}i & 0 \\
    0 & 0 & 0 & \dfrac{1}{\sqrt{3}}i
\end{pmatrix}
\quad \text{and} \quad
A^{\dagger} = 
\begin{pmatrix}
    1 & 0 & 0 & 0 \\
    0 & -1 & 0 & 0 \\
    0 & 0 & -\sqrt{3}\,\vphantom{\dfrac{1}{2}}i & 0 \\
    0 & 0 & 0 & \sqrt{3}\,\vphantom{\dfrac{1}{2}}i
\end{pmatrix}.
\end{eqnarray*}
Thus,
\begin{eqnarray*}
A + A^{\dagger} = 
\begin{pmatrix}
    2 & 0 & 0 & 0 \\
    0 & -2 & 0 & 0 \\
    0 & 0 & -\dfrac{2}{\sqrt{3}}i & 0 \\
    0 & 0 & 0 & \dfrac{2}{\sqrt{3}}i
\end{pmatrix}
\quad \text{and} \quad
2A^* = 
\begin{pmatrix}
    2 & 0 & 0 & 0 \\
    0 & -2 & 0 & 0 \\
    0 & 0 & -\dfrac{2}{\sqrt{3}}i & 0 \\
    0 & 0 & 0 & \dfrac{2}{\sqrt{3}}i
\end{pmatrix}.
\end{eqnarray*}
Therefore, $A$ satisfies $A + A^{\dagger} = 2A^*$, i.e., $\frac{1}{3}(A + A^{*} + A^{\dagger}) = A^{*}$, but evidently $A^{*} \neq A$, so $A$ is not an orthogonal tripotent matrix.

\end{example}

\noindent  In the next theorem, we discuss the relation between
$\frac{1}{3}(A + A^{*} + A^{\dagger}) = A^{*}$ and \(A \in \mathbb{C}_{n}^{3\text{-}OP}\). 

\begin{theorem}\label{range} Let $A\in\mathbb{C}^{n\times n}$. The following are equivalent:

\noindent $(a)$  $A$ is an orthogonal tripotent matrix;

\noindent $(b)$ 	$\frac{1}{3}(A + A^{*}+A^{\dagger}) = A^{*}$, and $\pm\frac{1}{\sqrt{3}}i$ is not eigenvalues of $A$.
\end{theorem}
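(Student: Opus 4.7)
The plan is to continue the Hartwig-Spindelb\"{o}ck strategy used in Theorem~\ref{za}, augmented with a spectral analysis, since the equation $\tfrac{1}{3}(A+A^*+A^\dagger)=A^*$ by itself admits non-real solutions and will not force $\Sigma=I_r$ until the eigenvalue hypothesis in $(b)$ is invoked. The direction $(a)\Rightarrow(b)$ is immediate from Theorem~\ref{t3}: if $A\in\mathbb{C}_n^{3\text{-}OP}$ then $A=A^*=A^\dagger$, so the stated average equals $A^*$; moreover a Hermitian matrix has only real eigenvalues, so $\pm\tfrac{1}{\sqrt{3}}i$ automatically cannot appear.

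For $(b)\Rightarrow(a)$ I would substitute (\ref{2.1}) and (\ref{eMP}) into the equivalent equation $A+A^\dagger=2A^*$. The off-diagonal blocks force $L=0$, after which (\ref{2.2}) gives $KK^*=I_r$, so $K$ is unitary. Setting $B:=\Sigma K$, so that $B^*=K^*\Sigma$, $B^{-1}=K^*\Sigma^{-1}$ and $BB^*=\Sigma^2$, the $(1,1)$-block equation collapses to the compact identity $B+B^{-1}=2B^*$. Right- and left-multiplying this identity by $B$ yields $B^2+I_r=2B^*B$ and $B^2+I_r=2BB^*$, hence $B^*B=BB^*$ and $B$ is normal; diagonalizing unitarily as $B=W\Lambda W^*$ with $\Lambda=\operatorname{diag}(\lambda_1,\ldots,\lambda_r)$, the matrix equation decouples into the scalar equations $\lambda_j+\lambda_j^{-1}=2\overline{\lambda_j}$.

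Writing $\lambda_j=x+iy$ and separating real and imaginary parts, I would obtain $x\bigl(|\lambda_j|^{-2}-1\bigr)=0$ and $y\bigl(3-|\lambda_j|^{-2}\bigr)=0$; the case $x\neq 0,\ y\neq 0$ would demand $|\lambda_j|^2=1$ and $|\lambda_j|^2=1/3$ simultaneously, which is impossible, so the complete solution set is $\{\pm 1,\pm\tfrac{1}{\sqrt{3}}i\}$. Since the nonzero eigenvalues of $A$ are exactly those of $B$, the hypothesis in $(b)$ discards $\pm\tfrac{1}{\sqrt{3}}i$ and forces $\lambda_j\in\{\pm 1\}$ for every $j$. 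Consequently $B$ is Hermitian with $B^2=I_r$, whence $\Sigma^2=BB^*=I_r$ gives $\Sigma=I_r$, and $K=B$ satisfies $K^2=I_r$ and $K=K^*$; Lemma~\ref{y5}$(h)$ then concludes $A\in\mathbb{C}_n^{3\text{-}OP}$.

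I expect the main obstacle to be the scalar analysis of $\lambda+\lambda^{-1}=2\overline{\lambda}$: everything else is essentially bookkeeping parallel to Theorem~\ref{za}, but here the equation genuinely admits the extra purely imaginary solutions $\pm\tfrac{1}{\sqrt{3}}i$, and it is precisely to exclude these that the additional eigenvalue assumption in $(b)$ becomes indispensable, as the preceding example illustrates.
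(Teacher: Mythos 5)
Your argument is correct, and it reaches the paper's key scalar equation $\lambda_j+\lambda_j^{-1}=2\overline{\lambda_j}$ with solution set $\{\pm 1,\pm\tfrac{1}{\sqrt{3}}i\}$ by a genuinely different route. The paper never invokes the Hartwig--Spindelb\"{o}ck decomposition here: it multiplies $A+A^{\dagger}=2A^{*}$ on the left and right by $A$ to obtain the commutator identity $2(AA^{*}-A^{*}A)=AA^{\dagger}-A^{\dagger}A$, then uses the range argument $\mathcal{R}(A)=\mathcal{R}(2A^{*}-A^{\dagger})\subseteq\mathcal{R}(A^{*})$ together with $\operatorname{r}(A)=\operatorname{r}(A^{*})$ to conclude $A$ is EP, hence $AA^{\dagger}=A^{\dagger}A$, hence $AA^{*}=A^{*}A$; normality of $A$ itself then gives the unitary diagonalization $A=W\,\mathrm{diag}(\lambda_1,\ldots,\lambda_r,0,\ldots,0)W^{*}$ and the scalar equation. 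You instead kill $L$ via the off-diagonal block, reduce to the invertible core $B=\Sigma K$, and prove normality of $B$ by left- and right-multiplying $B+B^{-1}=2B^{*}$ by $B$ --- a computation that is formally the same trick as the paper's but applied to the core block rather than to $A$. Your version is more uniform with the rest of the paper (Theorem~\ref{za} uses exactly this decomposition) and makes the exclusion of $\pm\tfrac{1}{\sqrt{3}}i$ transparent at the level of the eigenvalues of $\Sigma K$; the paper's version is decomposition-free and exhibits the EP property of $A$ as an intermediate fact of independent interest. Both are complete; the only point worth making explicit in your write-up is that the nonzero eigenvalues of $A$ coincide with those of $B$ because, once $L=0$, $A$ is unitarily similar to $\operatorname{diag}(B,0)$ with $B$ invertible.
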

\begin{proof}  $(a)\Rightarrow(b):$ Evidently.

\noindent  $(b)\Rightarrow(a):$ Assume that $(b)$ holds. Then $A + A^{\dagger}=2A^*$ holds and if we multiply it from left and right  by $A$ we get 
    $A^2 + AA^{\dagger} = 2AA^*$ and  $A^2 + A^{\dagger}A = 2A^*A$. Thus
    \begin{equation}\label{4.5}
    2(AA^* - A^*A) = AA^{\dagger} - A^{\dagger}A.
    \end{equation}
    We recall that $\mathcal{R}(A^{\dagger}) = \mathcal{R}(A^*)$, so by $A + A^{\dagger}=2A^*$ we have the following
    \begin{eqnarray*}\mathcal{R} (A)=\mathcal{R} (2A^* - A^{\dagger}) \subseteq \mathcal{R} (A^*) + \mathcal{R} (A^{\dagger}) = \mathcal{R} (A^*). \end{eqnarray*}
  Now, $\mathcal{R} (A)\subseteq \mathcal{R} (A^*)$ and $r(A)=r(A^*)$, implying $\mathcal{R} (A) = \mathcal{R} (A^*)$. Thus  $AA^{\dagger} = A^{\dagger}A$.
    Therefore, from (\ref{4.5}), we obtain $AA^* = A^*A $. Thus $A$ is diagonalizable, so there exists $W \in \mathbb{C}_n^{U}$ such that
    \begin{eqnarray}\label{eA}
    A = W\,\mathrm{diag}(\lambda_1, \cdots, \lambda_r, 0, \cdots, 0)\,W^*,
    \end{eqnarray}
    where $\lambda_j \neq 0$ for $j = \overline{1,r}$ and $r = r(A)$.
    Consequently, $A + A^{\dagger}=2A^*$ gives  $\lambda_j+\lambda_j^{-1} = 2\overline{\lambda}_j$, i.e.,  $\lambda_j^2 + 1 = 2\overline{\lambda}_j{\lambda}_j$. Solving  the last equation, we obtain that  for all $j$, we have $\lambda_j\in\{-1,1, -\frac{1}{\sqrt{3}}i, \frac{1}{\sqrt{3}}i\}$. Since by our assumption $\pm\frac{1}{\sqrt{3}}i$ are not eigenvalues of $A$ we  conclude that  for all $j$, we have $\lambda_j\in\{-1, 1\}$. Now by $(\ref{eA})$ it is easy to check that  $A \in \mathbb{C}_{n}^{3\text{-}OP}$ by Theorem~\ref{zq}. 
\end{proof}

\noindent The following theorem establishes that the orthogonal tripotent matrices are equivalent to solutions of linear equations involving $ A $, $ A^* $, and $ A^\dagger $.

\begin{theorem}\label{range}  Let $ A \in \mathbb{C}^{n \times n} $. The following are equivalent: 

  \noindent $(a)$  $A$ is an orthogonal tripotent matrix;\   $(b)$ $A^{*}+AA^{\dagger}A^*=A + AA^{*}A$;\\
   \noindent    $(c)$ $A+AA^{\dagger}A^*=A^{\dagger}+AA^*A^{\dagger}$;\ \   $(d)$ $A+A^2A^{\dagger}=A^{\dagger}+A^{\dagger}AA^*$;\\
  \noindent   $(e)$ $A+A^2A^*=A^{\dagger}+A^{\dagger}AA^*$;\ \  \ \    $(f)$ $A+A^2A^*=A^*+A^*AA^*$;\\
    \noindent  $(g)$ $A^{\dagger}A+A^{\dagger}A^*=A^2+(A^*)^2$;\ \  \  $(h)$ $A^{\dagger}A+A^*A^{\dagger}=A^2+(A^*)^2$.
   
\end{theorem}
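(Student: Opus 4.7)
The forward direction (a) $\Rightarrow$ each of (b)--(h) is immediate from Theorem~\ref{t3}: when $A = A^* = A^\dagger$ with $A^3 = A$, every identity collapses to a tautology (for instance, (b) becomes $A + A^3 = A + A^3$, and (g) becomes $2A^2 = 2A^2$).

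For the seven converses, my plan is a uniform Hartwig-Spindelb\"ock computation. Write $A$ as in (\ref{2.1}) with $KK^* + LL^* = I_r$, and use (\ref{eMP}) for $A^\dagger$. Substituting into each equation produces a $2\times 2$ block-matrix identity. In every case, the bottom-right (or a suitable off-diagonal) block forces $L = 0$; combined with (\ref{2.2}) this makes $K$ unitary and places $A$ into EP form. The surviving top-left block then becomes a scalar-matrix identity coupling $K$, $K^*$, and (possibly negative) powers of $\Sigma$.

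From this identity the argument proceeds in three steps: (i) multiply on the left and right by appropriate powers of $K$ and $K^*$, using $KK^* = I_r$, to isolate a commutation relation $f(\Sigma)\,K = K\,f(\Sigma)$ in which $f$ is a nontrivial sum of non-negative powers of $\Sigma$; (ii) apply Lemma~\ref{y2} or Remark~\ref{rrr} to deduce $\Sigma K = K\Sigma$; (iii) with $\Sigma$ and $K$ commuting and $K$ unitary, the two become simultaneously unitarily diagonalizable, and the identity reduces to scalar equations on the pairs $(\sigma_j,\kappa_j)$. Splitting into real and imaginary parts, as in the proof of Theorem~\ref{za}, forces $\sigma_j = 1$ for each $j$, so $\Sigma = I_r$; the residual scalar equation then yields $\kappa_j \in \mathbb{R}$ with $\kappa_j^2 = 1$. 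Hence $K = K^*$ and $K^2 = I_r$, so by Lemma~\ref{y5}(h), $A \in \mathbb{C}_n^{3\text{-}OP}$.

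The main obstacle is step (i), since the precise monomials in $\Sigma$ and $K$ differ across the seven equations. For (b)--(f) the identity is linear in $K$ and $K^*$, so a single multiplication by $K$ or $K^*$ suffices to expose the commutation via $KK^* = I_r$. Conditions (g) and (h), which combine the mixed term $K^*\Sigma^{-1}K^*\Sigma$ from $A^\dagger A^*$ with $\Sigma^2$ terms from $A^2$ and $(A^*)^2$, demand more delicate bookkeeping: one must clear the negative-power contributions so that the polynomial $f$ in step (i) has only non-negative exponents, otherwise the hypothesis $st \ge 0$ of Lemma~\ref{y2}(b) is violated and commutativity cannot be concluded. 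Once commutativity is established, the eigenvalue analysis in step (iii) is essentially the same in all seven cases.
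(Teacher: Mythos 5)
Your proposal follows essentially the same route as the paper: the forward direction via Theorem~\ref{t3}, and the converse via the Hartwig--Spindelb\"ock form, forcing $L=0$, extracting a commutation relation handled by Lemma~\ref{y2} to get $\Sigma K=K\Sigma$, and then concluding $\Sigma=I_r$ and $K=K^*$ with $K^2=I_r$ so that Lemma~\ref{y5} applies. The only cosmetic difference is the last step --- you diagonalize $K$ and take moduli as in Theorem~\ref{za}, whereas the paper's proof of $(b)\Rightarrow(a)$ derives $\Sigma^2+I_r=2K^2=2(K^*)^2$ and factors $(\Sigma^2-I_r)(\Sigma^2+3I_r)=0$ --- and, like the paper, you leave the genuinely messier cases $(g)$--$(h)$ (which are quadratic in $K$, $K^*$) unverified, though you at least flag them.
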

\begin{proof} By Theorem \ref{t3}, we have that $(a)$ implies each of the conditions $(b)$--$(h)$. We will now prove that $(b)$ implies $(a)$, noting that a similar argument shows that any of $(c)$--$(h)$ also implies $(a)$.

\noindent  $(b)\Rightarrow(a):$ From Lemma \ref{y1} and $A^{*} + AA^{\dagger}A^* = A + AA^{*}A$, we obtain 
$$\Sigma L+ \Sigma^3 L=0 \quad \text{and} \quad 2K^*\Sigma = \Sigma K+ \Sigma^3 K,$$ 
which immediately implies 
\begin{equation}\label{4.7} 
L = 0 \quad \text{and} \quad 2K^*\Sigma = (\Sigma + \Sigma^3)K. 
 \end{equation}
Consequently, $K^*\Sigma (\Sigma + \Sigma^3)K = (\Sigma + \Sigma^3)K K^*\Sigma$ holds, which implies  $(\Sigma^4 + \Sigma^2)K = K(\Sigma^4 + \Sigma^2).$ By Lemma \ref{y2}, we have $\Sigma K = K\Sigma$. Substituting this result into (\ref{4.7}) yields $2K^*\Sigma = K(\Sigma + \Sigma^3)$, from which it follows that  $\Sigma^2 + I_r = 2K^2 = 2(K^*)^2.$
Therefore, $(\Sigma^2 + I_r)^2 = 4K^2(K^*)^2 = 4I_r$, and factoring leads to  
$(\Sigma^2 - I_r)(\Sigma^2 + 3I_r) = 0.$ The invertibility of $\Sigma^{2}+3I_{r}$ implies $\Sigma = I_{r}$, which substituted into (\ref{4.7}) yields $K = K^{*}$. Thus, we have $A \in \mathbb{C}_n^{3\text{-}OP}$.
\end{proof}

\noindent We now characterize orthogonal tripotent matrices in terms of integer powers of $AA^*$ and $A^*A$.  
First, when $(A^\dagger)^\alpha = (A^\alpha)^\dagger$ holds for $\alpha \in \mathbb{N}$, Khatri \cite{Khatri1980} defined negative integer powers as 
$$A^{-\alpha} = (A^\dagger)^\alpha = (A^\alpha)^\dagger .$$
In the following, we describe the characterizations for the case where $s, t \in \mathbb{Z}$.


\begin{theorem}\label{zx}
    Let $A\in\mathbb{C}^{n\times n}$ and $s,t \in \mathbb{Z}$. The following are equivalent: 

  \noindent $(a)$  $A$ is an orthogonal tripotent matrix;\\
   \noindent  $(b)$ $A(AA^*)^{s}=A^{\dagger}(AA^*)^{t}$ with $s\neq t$ and $s - t + 1\neq0$;\\
   \noindent    $(c)$  $A^*(AA^*)^{s}=A(A^*A)^{t}$ with $s\neq t$ and $s + t + 1\neq0$;\\
     \noindent   $(d)$ $A(A^*A)^{s}=A^*(A^*A)^{t}$ with $s\neq t$ and $s - t + 1\neq0$;\\
  \noindent   $(e)$ $A(A^*A)^{s}=A^{\dagger}(A^*A)^{t}$ with $s\neq t$ and $s - t + 1\neq0$;\\   
    \noindent $(f)$ $A(AA^*)^{s}=A^*(AA^*)^{t}$ with $s\neq t$ and $s - t - 1\neq0$;\\
    \noindent  $(g)$ $A(A^*A)^{s}=A^{\dagger}(AA^*)^{t}$ with $s\neq -t$ and $s - t + 1\neq0$. 
 \end{theorem}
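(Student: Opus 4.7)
The forward direction (a) $\Rightarrow$ (b)--(g) follows immediately from Theorem~\ref{t3}: since $A = A^* = A^\dagger$ and $A^3 = A$, a quick check shows that $(AA^*)^s = (A^*A)^t = A^2$ for every $s, t \in \mathbb{Z}$ (with Khatri's convention for negative powers), so each of the seven identities collapses to $A^3 = A$.

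For the reverse direction I plan a uniform argument: write $A$ in the Hartwig-Spindelb\"{o}ck form of Lemma~\ref{y1}, compute $A^\dagger$ via (\ref{eMP}), and translate each equation of (b)--(g) into a block-matrix identity in $\Sigma, K, L$. The useful building blocks, easily verified from $KK^* + LL^* = I_r$, are $(AA^*)^s = U\begin{bmatrix}\Sigma^{2s} & 0 \\ 0 & 0\end{bmatrix}U^*$ and $(A^*A)^t = U\begin{bmatrix}K^*\Sigma^{2t}K & K^*\Sigma^{2t}L \\ L^*\Sigma^{2t}K & L^*\Sigma^{2t}L\end{bmatrix}U^*$, both valid for every $s, t \in \mathbb{Z}$. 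Comparing blocks in each of (b)--(g), I expect to extract two ingredients: a relation that forces $L = 0$ (and hence, via (\ref{2.2}), makes $K$ unitary), and after eliminating $L$, a clean equation of the form $K\Sigma^{a}K = \Sigma^{b}$ where $a$ and $b$ are odd integers depending linearly on $s, t$ (for example, $K\Sigma K = \Sigma^{2(t-s)-1}$ in (b)).

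Once $K$ is unitary, $K\Sigma^{a}K$ has the same singular values as $\Sigma^{a}$, so the equation $K\Sigma^{a}K = \Sigma^{b}$ gives the multiset equality $\{\sigma_i^{a}\} = \{\sigma_i^{b}\}$; translating this into a permutation identity $\sigma_i^{a} = \sigma_{\pi(i)}^{b}$ and iterating around any $\pi$-cycle of length $\ell$ yields $\sigma_{i_1} = \sigma_{i_1}^{(a/b)^\ell}$. The two numerical hypotheses in each of (b)--(g) are tailored precisely to exclude $a/b = 1$ and $a/b = -1$, the only real values whose integer powers can equal $1$; so $(a/b)^\ell \neq 1$, and positivity of $\sigma_{i_1}$ forces $\sigma_{i_1} = 1$. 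Hence $\Sigma = I_r$, which collapses $K\Sigma^{a}K = \Sigma^{b}$ to $K^2 = I_r$, and substituting $\Sigma = I_r$ into the original equation also yields $K^* = K$. Lemma~\ref{y5}(h) then concludes $A \in \mathbb{C}_n^{3\text{-}OP}$.

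The main obstacle will occur in cases (d) and (e), in which $A^*$ or $A^\dagger$ multiplies $(A^*A)^t$ from the left. The right-hand side no longer simplifies through the handy identity $A(A^*A)^t = (AA^*)^t A$, and all four block equations share a common left factor such as $K^*\Sigma K^*\Sigma^{2t}$. To reach the clean form I plan to assemble the $(1,1)$ and $(1,2)$ block equations as $\Sigma^{2s+1}[K\ L] = M[K\ L]$ and right-multiply by $[K\ L]^*$, invoking $[K\ L][K\ L]^* = KK^* + LL^* = I_r$ from (\ref{2.2}) to collapse them into a single matrix identity; the analogous manipulation applied to the $(2,1)$--$(2,2)$ block equations is what extracts $L = 0$. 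After this bookkeeping, the argument rejoins the uniform pattern described above.
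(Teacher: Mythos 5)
Your proposal is correct, and at the decisive step it takes a genuinely different route from the paper. Both proofs begin the same way: Hartwig--Spindelb\"{o}ck decomposition, block comparison, $L=0$, and a relation between $K$, $K^*$ and powers of $\Sigma$. From there the paper symmetrizes (e.g.\ in item $(b)$, from $\Sigma K=K^*\Sigma^{2(t-s)-1}$ it gets $K\Sigma^{2(t-s)}=\Sigma^{2(t-s)}K$), invokes Lemma~\ref{y2} to upgrade this to $K\Sigma=\Sigma K$, and then computes $\Sigma^{4(s-t+1)}=K^2(K^*)^2=I_r$, so that $s-t+1\neq0$ forces $\Sigma=I_r$. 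You instead put the relation in the form $K\Sigma^aK=\Sigma^b$ with $a,b$ odd and compare singular values: once $L=0$ makes $K$ unitary, the multisets $\{\sigma_i^a\}$ and $\{\sigma_i^b\}$ coincide, and iterating the resulting permutation identity around a cycle gives $\sigma_i=\sigma_i^{(a/b)^\ell}$, whence $\sigma_i=1$ because the two side conditions in each item rule out exactly $a/b=\pm1$. This bypasses Lemma~\ref{y2} altogether and has the virtue of explaining, uniformly, why each item carries precisely two excluded parameter values; the paper's route is more mechanical but leans on a commutation lemma it needs elsewhere anyway. One detail to tighten in your treatment of items $(d)$ and $(e)$: the collapsed $(2,1)$--$(2,2)$ block equations yield only $L^*\Sigma^{\pm1}K^*=0$, not $L=0$ outright; you must first read off from the collapsed $(1,1)$--$(1,2)$ identity (e.g.\ $K^*\Sigma K^*=\Sigma^{2(s-t)+1}$ in $(d)$) that $K$ is invertible, and only then conclude $L=0$ and hence that $K$ is unitary. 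With that ordering made explicit, every case goes through as you describe.
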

\begin{proof}  By Theorem \ref{t3}, we have that $(a)$ implies each of the conditions $(b)$--$(g)$. We will now prove that $(b)$ implies $(a)$, noting that a similar argument shows that any of $(c)$--$(g)$ also implies $(a)$.

\noindent  $(b)\Rightarrow(a):$  By Lemma \ref{y1}, it follows from $A(AA^{*})^{s}=A^{\dagger}(AA^{*})^{t}$ that
        \begin{equation}\label{4.15}
          L = 0 \quad \text{and} \quad \Sigma K = K^{*}\Sigma^{2(t - s)-1}, 
         \end{equation} 
         i.e.,  $\Sigma K K^{*}\Sigma^{2(t - s)-1}=K^{*}\Sigma^{2(t - s)-1} \Sigma K$,
          and consequently $ K\Sigma^{2(t - s)}=\Sigma^{2(t - s)}K.$ 
        For $s \neq t$, by Lemma \ref{y2} we get  $K\Sigma = \Sigma K$, which combined with (\ref{4.15}) gets
          \begin{eqnarray*}K\Sigma = K^{*}\Sigma^{2(t - s)-1}. \end{eqnarray*}
        Hence, $\Sigma^{2(s - t + 1)} = (K^*)^2 = K^2$, so $\Sigma^{4(s - t + 1)} = K^2(K^*)^2 = I_r$.
        For $s - t + 1 \neq 0$ implies $\Sigma = I_{r}$, and thus $K^2 = I_{r}$.
\end{proof}

\noindent As a corollary of Theorem~\ref{zx} in the cases where $s = 0$ or $t = 0$, we have the following results:

\begin{corollary}\label{lk} Let $A\in\mathbb{C}^{n\times n}$ and $s \in \mathbb{Z}\setminus\{0\}$. The following are equivalent: 

  \noindent $(a)$  $A$ is an orthogonal tripotent matrix;\ \   $(b)$$A = A^{\dagger}(AA^*)^{s}$ with $s\neq1$;\\
  \noindent  $(c)$ $A^{\dagger}=A(AA^*)^{s}$ with $s\neq - 1$;\ \   $(d)$ $A^* = A(A^*A)^{s}$ with $s\neq - 1$;\\
  \noindent  $(e)$ $A = A^*(AA^*)^{s}$ with $s\neq - 1$;\ \   $(f)$ $A = A^*(A^*A)^{s}$ with $s\neq 1$;\\
  \noindent  $(g)$ $A = A^{\dagger}(A^*A)^{s}$ with $s\neq 1$;\ \ 
 $(h)$  $A^{\dagger}=A(A^*A)^{s}$ with $s\neq - 1$;\\
  \noindent  $(i)$ $A^* = A(AA^*)^{s}$ with $s\neq1$.
   
\end{corollary}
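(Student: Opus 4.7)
The plan is to obtain Corollary \ref{lk} as an immediate specialization of Theorem \ref{zx} by setting one of the two integer exponents $s$ or $t$ equal to $0$, and then to verify that the side conditions on the remaining exponent in each clause of Corollary \ref{lk} match exactly the conditions $s \neq t$ and the various $s - t \pm 1 \neq 0$ inequalities appearing in Theorem \ref{zx}. Since Theorem \ref{zx} already guarantees each of its conditions (b)--(g) is equivalent to $A \in \mathbb{C}_n^{3\text{-}OP}$, nothing new has to be proved; all that remains is a dictionary.

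More concretely, I would go through the nine items one by one. For Corollary \ref{lk}(b), set $s=0$ in Theorem \ref{zx}(b) (after renaming the free exponent), so that the equation $A(AA^*)^0 = A^{\dagger}(AA^*)^{s}$ reduces to $A = A^{\dagger}(AA^*)^s$, while the hypotheses $s \neq t$ and $s - t + 1 \neq 0$ become $s \neq 0$ (already imposed since $s \in \mathbb{Z}\setminus\{0\}$) and $s \neq 1$. Corollary \ref{lk}(c) follows by instead setting $t = 0$ in Theorem \ref{zx}(b), where the constraint $s - t + 1 \neq 0$ becomes $s \neq -1$. The same scheme yields (d) and (e) from Theorem \ref{zx}(c) (setting $s=0$ or $t=0$, with the constraint $s+t+1 \neq 0$ specializing to $t \neq -1$ and $s \neq -1$ respectively); (f) from Theorem \ref{zx}(d) with $s = 0$; (g) and (h) from Theorem \ref{zx}(e) with $s = 0$ and $t = 0$ respectively; and (i) from Theorem \ref{zx}(f) with $t = 0$, where the constraint $s - t - 1 \neq 0$ becomes $s \neq 1$.

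Thus the proof will simply say: \emph{By Theorem \ref{t3}, (a) implies each of (b)--(i). Conversely, each of (b)--(i) arises from Theorem \ref{zx} by setting one of the two integer exponents equal to zero; in each case a direct check shows that the excluded value of $s$ in the corollary corresponds precisely to the hypotheses $s \neq t$ and the corresponding $s - t \pm 1 \neq 0$ in Theorem \ref{zx}. Hence each condition implies (a).} For completeness I might spell out the correspondence explicitly for one representative case (say (b)) and then say that the remaining cases are analogous.

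The only real obstacle is bookkeeping: one must keep straight which of the six identities in Theorem \ref{zx} each of the nine identities in Corollary \ref{lk} descends from, and check that the numerical side conditions match after the substitution $s=0$ or $t=0$. Since Corollary \ref{lk}(g) corresponds to Theorem \ref{zx}(e) with $s=0$ (not to Theorem \ref{zx}(g), which would require different manipulation), and since items like (c) and (h) look almost identical but live in different theorems of the list, a small table matching the items is the cleanest presentation and I would include it to avoid confusion.
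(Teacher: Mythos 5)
Your proposal is correct and is exactly the paper's approach: the paper states Corollary~\ref{lk} with no written proof, describing it precisely as the specialization of Theorem~\ref{zx} to the cases $s=0$ or $t=0$. Your item-by-item matching of the equations and of the side conditions ($s\neq t$ and $s-t\pm1\neq0$ collapsing to the stated exclusions) checks out, so nothing further is needed.
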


\noindent Theorem \ref{zq} implies that $AA^* \in \mathbb{C}_{n}^{OP}$ when $A \in \mathbb{C}_{n}^{3\text{-}OP}$. 
We now characterize orthogonal tripotent matrices through this property. The proof will be ommited since the technique is the same as in the previous one.

\begin{theorem}\label{range} Let $ A \in \mathbb{C}^{n \times n} $. The following are equivalent: 

  \noindent $(a)$  $A$ is an orthogonal tripotent matrix;\ $(b)$ $AA^* \in \mathbb{C}_{n}^{OP}$ and $A = A^\dagger(AA^*)^s$;\\
   \noindent $(c)$ $AA^* \in \mathbb{C}_{n}^{OP}$ and $A^\dagger = A(AA^*)^s$;\ 
 $(d)$ $AA^* \in \mathbb{C}_{n}^{OP}$ and  $A^* = A(A^*A)^s$;\\
  \noindent $(e)$ $AA^* \in \mathbb{C}_{n}^{OP}$ and  $A = A^*(AA^*)^s$;\ 
  \noindent $(f)$ $AA^* \in \mathbb{C}_{n}^{OP}$ and  $A^* = A(AA^*)^s$; \\
  \noindent $(g)$ $AA^* \in \mathbb{C}_{n}^{OP}$ and  $A^{\dagger} = A(A^*A)^s$;\
  \noindent $(h)$ $AA^* \in \mathbb{C}_{n}^{OP}$ and  $A(AA^*)^{s}=A^{\dagger}(AA^*)^{t}$;\\
  \noindent $(i)$ $AA^* \in \mathbb{C}_{n}^{OP}$ and  $A^*(AA^*)^{s}=A(A^*A)^{t}$;\
  \noindent $(j)$ $AA^* \in \mathbb{C}_{n}^{OP}$ and  $A(A^*A)^{s}=A^*(A^*A)^{t}$;\\
  \noindent $(k)$ $AA^* \in \mathbb{C}_{n}^{OP}$ and  $A(AA^*)^{s}=A^*(AA^*)^{t}$;\
  \noindent $(l)$ $AA^* \in \mathbb{C}_{n}^{OP}$ and  $A(A^*A)^{s}=A^{\dagger}(AA^*)^{t}$.
\end{theorem}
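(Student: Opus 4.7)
The strategy is to split the argument into the forward implications $(a) \Rightarrow (b)$--$(l)$ and the eleven converses. For the forward direction I would apply Theorem~\ref{t3} to obtain $A = A^{*} = A^{\dagger}$, from which $(AA^{*})^{2} = A^{4} = A^{2} = AA^{*}$ and $(AA^{*})^{*} = AA^{*}$, so $AA^{*} \in \mathbb{C}_{n}^{OP}$. Since $AA^{*}$ is then a projection, $(AA^{*})^{s} = AA^{*}$ for every nonzero $s \in \mathbb{Z}$ (using Khatri's convention for negative powers), and the same holds for $(A^{*}A)^{s} = A^{*}A = AA^{*}$. Substituting these identities together with $A = A^{*} = A^{\dagger}$ into each of $(b)$--$(l)$ reduces every one of them to a trivial equality such as $A^{3} = A$.

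For the converses, since the arguments are structurally identical, I would write up case $(b)$ in detail and remark (as the authors do for the preceding theorem) that $(c)$--$(l)$ proceed analogously. The decisive observation is that $AA^{*} \in \mathbb{C}_{n}^{OP}$ by itself already determines $\Sigma = I_{r}$ in the Hartwig--Spindelb\"ock decomposition of $A$. Indeed, Lemma~\ref{y1} together with $KK^{*} + LL^{*} = I_{r}$ gives
\[
AA^{*} = U\begin{pmatrix} \Sigma^{2} & 0 \\ 0 & 0 \end{pmatrix} U^{*},
\]
so $(AA^{*})^{2} = AA^{*}$ becomes $\Sigma^{4} = \Sigma^{2}$, forcing $\Sigma = I_{r}$ by positivity of the diagonal entries.

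With $\Sigma = I_{r}$ at hand, $AA^{*}$ is an orthogonal projection of rank $r$, and since the nonzero eigenvalues of $A^{*}A$ coincide with those of $AA^{*}$ the same holds for $A^{*}A$; consequently $(AA^{*})^{s}$ and $(A^{*}A)^{s}$ both collapse to $U\,\mathrm{diag}(I_{r},0)\,U^{*}$ for every nonzero $s \in \mathbb{Z}$. This is exactly what renders vacuous the side conditions on $s$ and $t$ that appeared in Theorem~\ref{zx}. Substituting into the equation of $(b)$, namely $A = A^{\dagger}(AA^{*})^{s}$, and comparing blocks against the Moore--Penrose formula (\ref{eMP}), yields $L = 0$ and $K = K^{*}$; combined with $KK^{*} = I_{r}$ this gives $K^{2} = I_{r}$, whence $A \in \mathbb{C}_{n}^{3\text{-}OP}$ by Lemma~\ref{y5}$(h)$.

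The main obstacle I anticipate is bookkeeping rather than genuine novelty: one must verify that for each of the eleven equations the block reduction under $\Sigma = I_{r}$ really forces the pair $\{L = 0,\ K = K^{*}\}$, and not some strictly weaker relation. The one-sided cases $(b)$--$(g)$ reduce immediately to the pattern above, but the mixed cases $(h)$--$(l)$, each carrying two independent exponents, require a short block computation apiece to confirm that no information is lost when $s$ and $t$ are allowed to vary freely. Once one or two representative mixed cases (for instance $(i)$ and $(l)$) are checked explicitly, the remaining cases are mechanical.
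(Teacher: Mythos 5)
Your overall strategy is the right one and is exactly what the paper has in mind: the paper omits the proof of this theorem, remarking only that the technique is that of Theorem~\ref{zx}, and your key observation --- that $AA^*\in\mathbb{C}_n^{OP}$ alone forces $\Sigma=I_r$ via $AA^*=U\operatorname{diag}(\Sigma^2,0)U^*$, and hence neutralizes the exponent restrictions of Theorem~\ref{zx} --- is precisely the point of adding that hypothesis. The forward direction and the worked case $(b)$ are correct.

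There is, however, one false intermediate claim that you would need to repair before the deferred cases can be called mechanical. You assert that $(A^*A)^s$ also collapses to $U\,\operatorname{diag}(I_r,0)\,U^*$. That is not true: with $\Sigma=I_r$ one has
\[
A^*A=U\begin{pmatrix}K^*K & K^*L\\ L^*K & L^*L\end{pmatrix}U^*,
\]
which is the orthogonal projection onto $\mathcal{R}(A^*)$, and this equals $U\,\operatorname{diag}(I_r,0)\,U^*=AA^*$ only when $L=0$ --- which is part of what you are trying to prove, so invoking the claim would be circular. The correct statement is merely that $(A^*A)^s=A^*A=A^\dagger A$ for all $s\neq 0$, and the block computations in cases $(d)$, $(g)$, $(i)$, $(j)$, $(l)$ must be carried out with this full $2\times 2$ block rather than with $\operatorname{diag}(I_r,0)$. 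They do still close: for instance, since $\Sigma=I_r$ makes $A$ a partial isometry by Lemma~\ref{y5}$(f)$, one gets $A(A^*A)^s=(AA^*)^sA=AA^*\cdot A=A$ for $s\neq 0$, so $(d)$ reduces to $A^*=A$, which with $KK^*=I_r$ gives $L=0$ and $K=K^*$. You should also dispose of the exponent value $0$, which your collapse argument does not cover: there $(AA^*)^0=I_n$ and the equations reduce to identities such as $A=A^\dagger$ or $A=A^*$ that are settled directly by Lemma~\ref{y5}. With these repairs the argument is complete and coincides with the paper's intended route.
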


\noindent We now characterize orthogonal tripotent matrices by combining linear equations involving the rank and trace of matrix. 

\begin{theorem}\label{t11} Let $ A \in \mathbb{C}^{n \times n} $. The following are equivalent: 

 \noindent $(a)$  $A$ is an orthogonal tripotent matrix;
   
 \noindent  $(b)$ $\operatorname{tr}(A^{*}A)+\operatorname{tr}((A^{*}A)^{\dagger}) = 2\operatorname{r}(A)$, $A^{*}A = A^{*}A^{\dagger}$; 
 
 \noindent  $(c)$ $\operatorname{tr}(A^{*}A)+\operatorname{tr}((A^{*}A)^{\dagger}) = 2\operatorname{r}(A)$, $A^{\dagger}A = A^{\dagger}A^{*}$; 
 
 \noindent  $(d)$ $\operatorname{tr}(AA^*)+\operatorname{tr}(A^*A)=2\operatorname{Re}(\operatorname{tr}(A^2)), A^*A=A^*A^{\dagger}$;
 
 \noindent  $(e)$ $\operatorname{tr}(AA^{\dagger})+\operatorname{tr}(A^3A^{\dagger}(A^*)^2)=2\operatorname{Re}(\operatorname{tr}(A^2)), AA^*=A^*A$.

\end{theorem}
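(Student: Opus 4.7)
The strategy is to derive the forward direction $(a) \Rightarrow (b)$–$(e)$ directly from Theorem~\ref{t3} and to obtain each converse by passing to the Hartwig-Spindelb\"ock decomposition (Lemma~\ref{y1}, together with formula~(\ref{eMP})). Theorem~\ref{t3} collapses $A^*A$, $A^{\dagger}A$, $A^*A^{\dagger}$ all to $A^2$, which is an orthogonal projection of rank $r$, and also gives $A^3A^{\dagger}(A^*)^2 = A^2$; every trace equation in $(b)$–$(e)$ then reduces to an identity of the form ``$r + r = 2r$'', so the forward implications are immediate.

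For the converses $(b)$, $(c)$, $(d)$, the algebraic part of each hypothesis reads off cleanly in Hartwig-Spindelb\"ock form: the $(2,2)$ block forces $L^*\Sigma^2 L = 0$ (resp.\ $L^*L = 0$), hence $L = 0$, so $KK^* = I_r$ and $K$ is unitary. The $(1,1)$ block, after left multiplication by $K$, becomes $\Sigma K\Sigma = K^*$ for $(b)$ and $(d)$—from which $(\Sigma K)^2 = K^*K = I_r$—and $K^*\Sigma = \Sigma K$ for $(c)$, which makes $\Sigma K$ Hermitian.

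The trace conditions then pin down $\Sigma = I_r$. For $(b)$ and $(c)$, one computes $\operatorname{tr}(A^*A) = \sum\sigma_i^2$ and $\operatorname{tr}((A^*A)^{\dagger}) = \sum\sigma_i^{-2}$, so the hypothesis becomes $\sum(\sigma_i^2 + \sigma_i^{-2}) = 2r$, and AM-GM ($\sigma^2 + \sigma^{-2} \ge 2$) gives $\sigma_i = 1$ for every $i$. For $(d)$, the involution $(\Sigma K)^2 = I_r$ implies $\operatorname{tr}(A^2) = r \in \mathbb{R}$, reducing the hypothesis to $\sum\sigma_i^2 = r$; since $\Sigma K$ is an involution, its singular value multiset $\{\sigma_i\}$ is closed under $s \mapsto 1/s$, and a pairing argument combined with $s^2 + s^{-2} \ge 2$ again forces $\sigma_i = 1$. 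Once $\Sigma = I_r$, the structural relations collapse to $K = K^*$ and $K^2 = I_r$, so Lemma~\ref{y5}(h) delivers $A \in \mathbb{C}_n^{3\text{-}OP}$.

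For $(e)$, the condition $AA^* = A^*A$ together with Lemma~\ref{y5}(c) gives $L = 0$ and $\Sigma K = K\Sigma$, so $A$ is unitarily diagonalizable with nonzero eigenvalues $\lambda_1,\ldots,\lambda_r$. Direct computation yields $\operatorname{tr}(AA^{\dagger}) = r$, $\operatorname{tr}(A^3 A^{\dagger}(A^*)^2) = \sum_i |\lambda_i|^4$, and $\operatorname{Re}(\operatorname{tr}(A^2)) = \sum_i \operatorname{Re}(\lambda_i^2)$, so the hypothesis reads
\[
\sum_{i=1}^r \bigl[\,1 + |\lambda_i|^4 - 2\operatorname{Re}(\lambda_i^2)\,\bigr] = 0.
\]
The main obstacle is finding a manifestly nonnegative decomposition of each summand; the clean identity
\[
1 + |\lambda|^4 - 2\operatorname{Re}(\lambda^2) = (|\lambda|^2 - 1)^2 + 4(\operatorname{Im}\lambda)^2 \ge 0
\]
forces $|\lambda_i| = 1$ and $\operatorname{Im}\lambda_i = 0$, i.e., $\lambda_i \in \{\pm 1\}$. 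Since $A$ is normal with real eigenvalues in $\{-1, 0, 1\}$, we conclude that $A = A^*$ and $A^3 = A$, completing the proof.
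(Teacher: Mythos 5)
Your proof is correct, and while it shares the paper's overall architecture (Hartwig--Spindelb\"ock decomposition for the algebraic conditions, Lemma~\ref{y5}(h) to close), it handles the trace conditions by a genuinely different mechanism. The paper's key move for $(b)$ is to rewrite $\operatorname{tr}(A^*A)+\operatorname{tr}((A^*A)^{\dagger})-2\operatorname{r}(A)$ as $\operatorname{tr}\bigl[(A^*-A^{\dagger})(A^*-A^{\dagger})^*\bigr]$, so that positive semidefiniteness yields $A^*=A^{\dagger}$ (hence $\Sigma=I_r$) in one stroke, and the analogous Gram-matrix identity with $A-A^*$ handles $(d)$; you instead express both traces through singular values and invoke $\sigma^2+\sigma^{-2}\ge 2$, and for $(d)$ you exploit the fact that an involution's singular value multiset is closed under $s\mapsto 1/s$. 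Both routes are valid; the paper's is more uniform across $(b)$--$(e)$ and avoids the reciprocal-pairing argument, while yours is more elementary at each step and, notably, supplies an explicit proof of $(e)$ (which the paper waves off as ``very similar'') via the clean nonnegative decomposition $1+|\lambda|^4-2\operatorname{Re}(\lambda^2)=(|\lambda|^2-1)^2+4(\operatorname{Im}\lambda)^2$ --- I checked this identity and it holds. One stylistic remark: in $(b)$ and $(c)$ your AM--GM step does not even need $L=0$, since $\operatorname{tr}(A^*A)=\sum\sigma_i^2$ and $\operatorname{tr}((A^*A)^{\dagger})=\sum\sigma_i^{-2}$ hold for any $A$ of rank $r$, so the trace hypothesis alone already forces $\Sigma=I_r$; this mirrors the paper's observation that the trace condition by itself is equivalent to $A$ being a partial isometry.
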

\begin{proof}   By Theorem~\ref{zq},  it is clear that $(a)$ implies any of items $(b)$--$(e)$. Now, let us prove that $(b)$ implies $(a)$. 

\noindent  $(b)\Rightarrow(a):$  Note that $ \operatorname{tr}(A^{\dagger}A)=\operatorname{tr}((A^{\dagger}A)^*)=\operatorname{r}(A) $.
        Then $$\operatorname{tr}(A^{*}A) + \operatorname{tr}((A^{*}A)^{\dagger}) = 2\operatorname{r}(A)= \operatorname{tr}(A^{\dagger}A)+\operatorname{tr}((A^{\dagger}A)^*),$$ 
        i.e.,  $\operatorname{tr}[(A^* - A^{\dagger})(A^* - A^{\dagger})^*] = 0.$
        As $(A^* - A^{\dagger})(A^* - A^{\dagger})^*$ is semidefinite, we get  $A^* = A^{\dagger}$.
        Applying Lemma~\ref{y1}, we have $\Sigma = I_r$. 
        Moreover, by Lemma~\ref{y1} and $A^*A = A^*A^\dagger$, we obtain  
        \begin{equation*}
            K^*\Sigma^2L = 0, \quad L^*\Sigma^2L = 0 \quad \text{and} \quad K^*\Sigma^2K = K^*\Sigma K^*\Sigma^{-1},
        \end{equation*}
        from which left-multiplying the first two by $K$ and $L$, respectively, and summing yields $\Sigma^2L = 0$, i.e., $L = 0$. Consequently, the third equation reduces to $(\Sigma K)^2 = I_r$.
        Combining this with Lemma \ref{y5} we have 
        $$
        A^*A = A^*A^{\dagger} \Leftrightarrow  L = 0, (\Sigma K)^2 = I_r \Leftrightarrow  A = A^{\dagger}.
        $$ 
        Thus, $A = A^* = A^{\dagger}$, and it follows that $\mathbb{C}_n^{3\text{-}OP}$ by Theorem~\ref{t3}.
        
\noindent  The proof that any of $(c)$--$(e)$ implies $(a)$ is very similar to the previous one.     
\end{proof}

\begin{remark}\label{lpo}
    From \textup{Theorem~\ref{t11}($e$)}, we obtain:\\
     $(a)$  $A \in \mathbb{C}_{n}^{3\text{-}OP}$ if and only if $A=A^*$ and $\operatorname{r}(A) + \operatorname{tr}(A^4) = 2\operatorname{Re}(\operatorname{tr}(A^2))$;
     $(b)$  $A \in \mathbb{C}_{n}^{3\text{-}OP}$ if and only if $AA^*=A^*A$ and $\operatorname{r}(AA^{\dagger}) + \operatorname{tr}(A^2(A^*)^2) = 2\operatorname{Re}(\operatorname{tr}(A^2))$;
      $(c)$  For $k \in \mathbb{Z}^+$ with $k \geq 2$, $A^k=A$ if and only if $\operatorname{r}(A) + \operatorname{tr}(A^kA^{\dagger}(A^*)^{k-1}) = 2\operatorname{Re}(\operatorname{tr}(A^{k-1}))$.

\end{remark}


\begin{theorem}\label{range}  Let $ A \in \mathbb{C}^{n \times n} $. The following are equivalent: 

\noindent $(a)$  $A$ is an orthogonal tripotent matrix;\\
\noindent $(b)$ $A$ \mbox{is normal}, $AA^* \in \mathbb{C}_n^{OP}$ and $\operatorname{r}(A)+\operatorname{tr}(AA^{*}) = 2\operatorname{tr}(A^2)$;\\
\noindent $(c)$ $A$ \mbox{is normal}, $AA^* \in \mathbb{C}_n^{OP}$ and $\operatorname{r}(A)+\operatorname{tr}(AA^{\dagger}) = 2\operatorname{tr}(A^*A^{\dagger})$;\\
\noindent $(d)$ $A$ \mbox{is normal}, $AA^* \in \mathbb{C}_n^{OP}$ and $\operatorname{r}(A)+\operatorname{tr}(AA^{*}) = 2\operatorname{Re}(\operatorname{tr}((A^*)^{2}))$;\\
\noindent $(e)$ $A$ \mbox{is normal}, $AA^* \in \mathbb{C}_n^{OP}$ and $\operatorname{r}(A)+\operatorname{tr}(AA^*) = \operatorname{Re}(\operatorname{tr}(AA^{\dagger}) + \operatorname{tr}(A^{\dagger})^{2})$.
\end{theorem}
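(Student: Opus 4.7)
The plan is straightforward. The implication (a) $\Rightarrow$ (b)--(e) follows by direct substitution into the canonical form supplied by Theorem~\ref{zq}: if $A = U\,\mathrm{diag}(1,\ldots,1,-1,\ldots,-1,0,\ldots,0)\,U^*$, then $A$ is normal, $AA^*$ is the orthogonal projection onto $\mathcal{R}(A)$, and each of the four trace identities reduces to an immediate count. So I will concentrate on the converse direction.

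Assume any one of (b)--(e) holds. Since $A$ is normal, the spectral theorem produces $W \in \mathbb{C}_n^U$ such that
\begin{equation*}
A = W\,\mathrm{diag}(\lambda_1,\ldots,\lambda_r,0,\ldots,0)\,W^*,
\end{equation*}
where $r = \operatorname{r}(A)$ and $\lambda_j \neq 0$ for $j=\overline{1,r}$. Consequently $AA^* = W\,\mathrm{diag}(|\lambda_1|^2,\ldots,|\lambda_r|^2,0,\ldots,0)\,W^*$, and the hypothesis $AA^* \in \mathbb{C}_n^{OP}$ forces $(AA^*)^2 = AA^*$, i.e.\ $|\lambda_j|^4 = |\lambda_j|^2$. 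Since $\lambda_j \neq 0$ this yields $|\lambda_j|=1$ for every $j$, so $\lambda_j^{-1}=\overline{\lambda_j}$ and $\operatorname{tr}(AA^*)=\operatorname{tr}(AA^\dagger)=r$.

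Under this reduction, each trace identity collapses to a scalar statement about the $\lambda_j$. In (b) it becomes $\sum_{j=1}^r \lambda_j^2 = r$; in (c), using $\operatorname{tr}(A^*A^\dagger)=\sum_j \overline{\lambda_j}/\lambda_j=\sum_j \overline{\lambda_j}^{\,2}$, it reads $\sum_j \overline{\lambda_j}^{\,2}=r$; in (d), $\sum_j \operatorname{Re}(\overline{\lambda_j}^{\,2})=r$; and in (e), interpreting $\operatorname{tr}(A^\dagger)^2$ as $\operatorname{tr}((A^\dagger)^2)=\sum_j \overline{\lambda_j}^{\,2}$, again $\sum_j \operatorname{Re}(\overline{\lambda_j}^{\,2})=r$. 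In every case the summands are unimodular, so by the elementary inequality $\operatorname{Re}(z)\le|z|$ (with equality iff $z$ is a nonnegative real) each summand must equal $1$. Hence $\lambda_j^{2}=1$, giving $\lambda_j \in \{-1,1\}$, and $A$ takes the form described by Theorem~\ref{zq}, which places it in $\mathbb{C}_n^{3\text{-}OP}$.

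The only real obstacle is bookkeeping: verifying how $\operatorname{tr}(A^*A^\dagger)$, $\operatorname{tr}((A^*)^2)$, and $\operatorname{tr}((A^\dagger)^2)$ evaluate on the spectral decomposition once $|\lambda_j|=1$ is in hand. I would carry out one implication, say (b)$\Rightarrow$(a), in full detail, and then remark that (c), (d), (e) are proved identically, exactly in the spirit already used in Theorem~\ref{t11}, with the sharpness of $\operatorname{Re}(z)\le|z|$ closing each case uniformly.
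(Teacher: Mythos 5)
Your proposal is correct and follows essentially the same route as the paper: spectral decomposition from normality, idempotency of $AA^*$ forcing $|\lambda_j|=1$, and the trace identity forcing $\lambda_j=\pm1$ (the paper writes $\lambda_j=a_j+ib_j$ and compares $\sum a_j^2$ with $r$, which is just an unpacked version of your $\operatorname{Re}(z)\le|z|$ equality argument). No substantive difference.
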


\begin{proof} By Theorem~\ref{zq}, evidently the condition $(a)$ implies each of conditions $(b)$--$(e)$.

\noindent $(b)\Rightarrow(a):$    If $A$ is normal,then $A$ is unitarily similar to a diagonal matrix, i.e.,  
    \begin{equation}\label{4.20}
    A = U\mathrm{diag}(\lambda_1, \cdots, \lambda_r, 0, \cdots, 0)U^*, 
    \end{equation}
    where $U \in \mathbb{C}_n^{U}$, $r=\operatorname{r}(A)$ and $\lambda_j$ for $j=\overline{1,r}$ are the non-zero eigenvalues of $A$. Since  $AA^*$ is idempotent, then for each $j$, we have $|\lambda_j|^4 = |\lambda_j|^2$, so $|\lambda_j| = 1$, $j=\overline{ 1,r}$. 
    From the condition $\operatorname{r}(A)+\operatorname{tr}(AA^{*}) = 2\operatorname{tr}(A^2)$, we obtain $r + \displaystyle\sum_{j=1}^r \lambda_j \overline{\lambda_j} = 2 \sum_{j=1}^r \lambda_j^2$, 
i.e.,  $r = \displaystyle\sum_{j=1}^r \lambda_j^2 $. Let $\lambda_j=a_j+ib_j$. Evidently, $|a_j| \leq 1 $, and $r = \displaystyle\sum_{j=1}^r \left(a_j^2 - b_j^2 + 2a_j b_j i\right)$. Thus $\displaystyle\sum_{j=1}^{r} 2a_jb_j i = 0$ and $r = \displaystyle\sum_{j=1}^{r} (a_j^2 - b_j^2)$.
Now, we have that $r \geq \displaystyle\sum_{j=1}^{r} a_j^2 = r + \displaystyle\sum_{j=1}^{r} b_j^2 \geq r$, so we conclude that $b_j = 0$ for all $j$, i.e., $\lambda_j = \pm 1$ for each $j$. Therefore, by Theorem \ref{zq}, we conclude that $A \in \mathbb{C}_n^{3\text{-}OP}$.

\noindent Other implications can be proved similarly. 
\end{proof}

\noindent  Motivated by (\ref{1.1}), in the next theorem, we separately consider the cases $A = A^*$ and $A = A^3$, and establish equivalent conditions for each equality.

\begin{theorem}\label{range}
     Let $A \in \mathbb{C}^{n \times n}$. Furthermore, let $(a)$--$(g')$ correspond to the following conditions:\\
  $\begin{aligned}
(a)\ & \operatorname{r}(A) + \operatorname{r}(I_n - A) + \operatorname{r}(I_n + A) = 2n; 
&& (a')\  A^{\dagger}A = A^{\dagger}A^{*}; \\
(b)\ & \mathcal{R}(A)\oplus \mathcal{R}(I_n - A)\oplus \mathcal{R}(I_n + A)=\mathbb{C}^{n};
&& (b')\  A^{\dagger}A = A^{*}A^{\dagger}; \\
(c)\ & \mathcal{N}(A)\oplus \mathcal{N}(I_n - A)\oplus \mathcal{N}(I_n + A)=\mathbb{C}^{n};
&& (c')\  A^{*}=A^2A^{\dagger}; \\
(d)\ & A^{k + 2}=A^k \text{ and } A \text{ is diagonalizable};
&& (d')\  A=AA^{\dagger}A^{*}; \\
(e)\ & A^{k + 2}=A^k \text{ and } A \in \mathbb{C}_n^{EP};
&& (e')\  A=A^{*}AA^{\dagger}; \\
(f)\ & A^{\dagger}(A^2)^*=A^{\dagger};
&& (f')\  A=A^{\dagger}A^{*}A; \\
(g)\ & A^{\dagger}=A^{\dagger}A^3A^{\dagger};
&& (g')\  A=A^{*}A^{\dagger}A.
\end{aligned}$\\
\noindent Then the following statements are equivalent:\\
\smallskip
\noindent $(i)$ $A \in \mathbb{C}_{n}^{3\text{-}OP}$;\\
\noindent $(ii)$ Any one of conditions $(a)$--$(g)$ holds along with any one of conditions $(a')$--$(g')$.
\end{theorem}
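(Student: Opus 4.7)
The plan is to factor the equivalence along the decomposition $\mathbb{C}_n^{3\text{-}OP} = \mathbb{C}_n^{TM} \cap \mathbb{C}_n^{H}$, which is immediate from~(\ref{1.1}). The argument splits into two independent claims: first, that each of $(a)$--$(g)$ is equivalent to the tripotent condition $A^3=A$; second, that each of $(a')$--$(g')$ is equivalent to the Hermitian condition $A=A^*$. Granted these, the implication $(i)\Rightarrow(ii)$ is immediate, while conversely any pair chosen from the two lists forces both $A^3=A$ and $A=A^*$ simultaneously, placing $A$ in $\mathbb{C}_n^{3\text{-}OP}$.

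For the tripotent side, I would exploit the fact that $A^3=A$ holds if and only if $A$ is diagonalizable with spectrum contained in $\{-1,0,1\}$. Condition $(c)$ then says precisely that the eigenspaces $\mathcal{N}(A),\mathcal{N}(I_n-A),\mathcal{N}(I_n+A)$ span $\mathbb{C}^n$; by rank--nullity this is equivalent to $(a)$, and $(b)$ is its range-side dual. Conditions $(d)$ and $(e)$ follow by noting that under diagonalizability or the EP assumption, the relation $A^{k+2}=A^k$ reduces to $\lambda^{k}(\lambda^2-1)=0$ on eigenvalues, forcing every nonzero $\lambda$ to satisfy $\lambda^2=1$ and hence $A^3=A$; for $(e)$ one additionally uses Hartwig--Spindelb\"ock with $L=0$ to see that $(\Sigma K)^{k+2}=(\Sigma K)^k$ together with the invertibility of $\Sigma K$ gives $(\Sigma K)^2=I_r$. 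Conditions $(f)$ and $(g)$ follow by multiplying by $A$ on an appropriate side and invoking the Penrose identities $AA^\dagger A=A$ and $A^\dagger AA^\dagger=A^\dagger$ together with $\mathcal{R}(A^\dagger)=\mathcal{R}(A^*)$.

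For the Hermitian side, the common strategy is to start from the given identity, multiply by one of $A$, $A^*$, $A^\dagger$ on a suitable side, and use the Penrose identities together with the key consequence $(A^*)^\dagger A^* = AA^\dagger$ to collapse the expression to $A=A^*$. For example, in $(a')$ the relation $A^\dagger A=A^\dagger A^*$, left-multiplied by $A$, gives $A=AA^\dagger A^*$; taking the conjugate transpose and using $(A^\dagger)^*=(A^*)^\dagger$ yields $A^*=A(A^*)^\dagger A^*=A\cdot AA^\dagger=A^2 A^\dagger$, and resubstituting produces $A=AA^\dagger\cdot A^2 A^\dagger=A^2 A^\dagger=A^*$. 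Conditions $(b'),(c'),(d'),(e')$ admit essentially the same one-step manipulation. For $(f'),(g')$ one first extracts the EP property by observing $A \in \mathcal{R}(A^\dagger)=\mathcal{R}(A^*)$ and matching ranks, then invokes Hartwig--Spindelb\"ock with $L=0$ to reduce the hypothesis to an equation of the form $M^*M=M^2$ with $M=\Sigma K$ invertible; cancellation yields $M=M^*$, i.e., $A=A^*$ by Lemma~\ref{y5}(a).

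The principal obstacle is the uniform case analysis needed to handle all fourteen conditions: the range-side statement $(b)$ requires a careful dimension count, since the individual ranges are not generally pairwise disjoint and the direct-sum claim must be read together with condition $(a)$; conditions $(d),(e)$ demand separating the algebraic hypothesis $A^{k+2}=A^k$ from the supplementary structural assumption of diagonalizability or EP; and $(f'),(g')$ require the preliminary step of establishing the EP property before the Hartwig--Spindelb\"ock reduction applies cleanly.
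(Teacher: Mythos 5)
Your proposal is correct and follows essentially the same route as the paper: both factor the claim through $\mathbb{C}_n^{3\text{-}OP}=\mathbb{C}_n^{TM}\cap\mathbb{C}_n^{H}$, proving each of $(a)$--$(g)$ equivalent to $A^3=A$ and each of $(a')$--$(g')$ equivalent to $A=A^*$ via the same eigenvalue, rank, and Hartwig--Spindelb\"ock arguments (your only deviation, extracting the EP property in $(f')$, $(g')$ from $\mathcal{R}(A)\subseteq\mathcal{R}(A^{\dagger})=\mathcal{R}(A^*)$ before cancelling, is if anything cleaner than the paper's block computation). One caution shared by both treatments: condition $(b)$ read literally as a direct sum forces $\operatorname{r}(A)+\operatorname{r}(I_n-A)+\operatorname{r}(I_n+A)=n$, which contradicts $(a)$, so like the paper you are implicitly reading $\oplus$ as an ordinary sum there; this is a defect of the statement rather than of your argument.
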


\begin{proof}
$(a):$ Note a known fact that
$$
\operatorname{r}\big(f(A)\big) + \operatorname{r}\big(g(A)\big) = n + \operatorname{r}\big(f(A)g(A)\big),
$$
where $f(x)$ and $g(x)$ are coprime complex coefficient polynomials. Applying this to $A$ and $I_n-A$, $A-A^2$ and $I_n+A$, we have
$$ 
\operatorname{r}(A) + \operatorname{r}(I_n - A) + \operatorname{r}(I_n + A) = n + \operatorname{r}(A - A^2) + \operatorname{r}(I_n + A) = 2n + \operatorname{r}(A - A^3).
$$
Therefore, $A^3 = A$ if and only if $\operatorname{r}(A) + \operatorname{r}(I_n - A) + \operatorname{r}(I_n + A) = 2n$.\\
$(a) \Leftrightarrow (b):$
Clearly, $\mathcal{R}(A) + \mathcal{R}(I_n - A) + \mathcal{R}(I_n + A) = \mathbb{C}^n$, which can be directly proved using part ($a$).\\
$(c):$
Since $A^3 = A$ if and only if $(A^*)^3 = A^*$, it follows that
\begin{align*}
 \mathcal{R} (A^*)\oplus \mathcal{R} (I_n - A^*)\oplus \mathcal{R} (I_n + A^*)=\mathbb{C}^{n} 
 \Leftrightarrow  \mathcal{N} (A)\oplus \mathcal{N} (I_n - A)\oplus \mathcal{N} (I_n + A)=\mathbb{C}^{n}.
\end{align*}  
$(d):$
 If $A$ is diagonalizable, there exists an invertible matrix $P$ such that
          \begin{eqnarray*}
          A = P \, \text{diag}(\lambda_1, \lambda_2, \ldots, \lambda_n) \, P^{-1},
          \end{eqnarray*}
          where $\lambda_i$ ($i = \overline{1,n}$) are the eigenvalues of $A$.
          From $A^{k+2} = A^k$, it follows that $\lambda_j^{k+2} = \lambda_j^k$ for each eigenvalue $\lambda_j$. Factoring yields $\lambda_j^k (\lambda_j^2 - 1) = 0$. Therefore, $\lambda_j \in \{0, 1, -1\}$.
          It is evident from Theorem~\ref{zq} that $A^3 = A$.\\
$(e):$ 
By Lemma~\ref{y5} and the condition $A \in \mathbb{C}_n^{\mathrm{EP}}$, we have $L = 0$. Combining this with the relation $A^{k+2} = A^k$ yields $(\Sigma K)^2 = I_r$, which consequently implies $A^3 = A$.\\
$(f):$
From $AA^{\dagger}(A^2)^* = AA^{\dagger}$, taking conjugate transposes yields $A^3A^{\dagger} = AA^{\dagger}$. Right-multiplying by $A$ and applying the identities $AA^{\dagger}A = A$ and $A^3A^{\dagger}A = A^3$ yields $A^3 = A$.\\
Since the condition $(g)$ is clearly equivalent to $A^3 = A$, it follows that one of all conditions $(a)$--$(g)$ are equivalent to $A^3 = A$.\\
\noindent Now let's consider the conditions in $(a')$--$(g')$. Only ($g'$) requires proof here since ($h'$) follows similarly and ($a'$)--($f'$) hold by direct computation.\\
We now prove ($g'$). From Lemma~\ref{y1} and $A = A^{\dagger}A^{*}A$, we obtain
\begin{align}
K^*\Sigma^{-1}K^*\Sigma^2K &= \Sigma K \quad \text{and} \quad K^*\Sigma^{-1}K^*\Sigma^2L = \Sigma L; \label{4.24} \\
L^*\Sigma^{-1}K^*\Sigma^2K &= 0 \quad \text{and} \quad L^*\Sigma^{-1}K^*\Sigma^2L = 0. \label{4.25}
\end{align}
By right-multiplying \eqref{4.24} by $K$ and $L$, respectively, summing the results, and performing the same operations on \eqref{4.25}, we obtain
\begin{equation}\
K^*\Sigma^{-1}K^*\Sigma^2 = \Sigma \quad \text{and} \quad L^*\Sigma^{-1}K^*\Sigma^2 = 0.\label{4.26}
\end{equation}
Applying a similar argument to \eqref{4.26} yields $\Sigma^{-1}K^*\Sigma^2 = K$. Substituting into $K^*\Sigma^{-1}K^*\Sigma^2 = \Sigma$ gives $K^*K = I_r$, implying $L = 0$. Then from (\ref{4.24}) we obtain $K^*\Sigma = \Sigma K$.\\
    Hence, by Lemma~\ref{y1}, we have
    \begin{align*}
    &A^{\dagger}A = A^{\dagger}A^* \Leftrightarrow A^{\dagger}A = A^*A^{\dagger} \Leftrightarrow A^{*}=A^2A^{\dagger} \Leftrightarrow A=AA^{\dagger}A^{*} \\
    \Leftrightarrow &A=A^{\dagger}AA^{*} \Leftrightarrow A=A^{*}AA^{\dagger} \Leftrightarrow A=A^{\dagger}A^{*}A \Leftrightarrow A=A^{*}A^{\dagger}A \\
    \Leftrightarrow &L=0,  K^*\Sigma = \Sigma K \Leftrightarrow A^* = A. 
    \end{align*}
By (\ref{1.1}), the satisfaction of one condition from $(a)$--$(g)$ and one from $(a')$--$(g')$ implies $A \in \mathbb{C}_n^{3\text{-}OP}$. 
  \end{proof}  
\noindent Furthermore, we try to weaken the constraints on parameters $s$ and $t$ in Theorem~\ref{zx}. This is done by building a matrix rank equality condition.

\begin{theorem}\label{range} Let $A \in \mathbb{C}^{n \times n}$ and $s,t \in \mathbb{Z}$. Furthermore, let $(a)$--$(g')$ correspond to the following conditions:\\
\smallskip
    \noindent $(a)$ $\operatorname{r}(I_{n}-AA^{*}) = n - \operatorname{r}(A)$;\\
    \noindent $(b)$ $\operatorname{r}(I_{n}-A^{*}A) = n - \operatorname{r}(A)$;\\
    \noindent $(c)$ $\operatorname{r}(I_{n}-A^{\dagger}A^{*}) = n - \operatorname{r}(A)$;\\
    \noindent $(d)$ $\operatorname{r}(I_{n}-A^{*}A^{\dagger}) = n - \operatorname{r}(A)$;\\
    \noindent $(a')$ $A = A^*(A^*A)^s$ with $s \neq 0$;\\
    \noindent $(b')$ $A^*(AA^*)^{s}=A(A^*A)^{t}$ with $s \neq t$;\\
    \noindent $(c')$ $A(A^*A)^{s}=A^*(A^*A)^{t}$ with $s \neq t$;\\
    \noindent $(d')$ $A(AA^*)^{s}=A^*(AA^*)^{t}$ with $s \neq t$;\\
    \noindent $(e')$ $A(A^*A)^{s}=A^{\dagger}(AA^*)^{t}$ with $s \neq t$;\\
    \noindent $(f')$ $A(AA^*)^{s}=A^{\dagger}(AA^*)^{t}$ with $s \neq t-1$;\\
    \noindent $(g')$ $A(A^*A)^{s}=A^{\dagger}(A^*A)^{t}$ with $s \neq t-1$.\\
\smallskip
\noindent Then the following statements are equivalent:\\
\smallskip
\noindent $(i)$ $A \in \mathbb{C}_{n}^{3\text{-}OP}$;\\
\noindent $(ii)$ Any one of conditions $(a)$--$(d)$ holds along with any one of conditions $(a')$--$(g')$.

\end{theorem}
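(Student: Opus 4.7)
The plan is to handle $(i) \Rightarrow (ii)$ first, which is immediate from Theorem~\ref{zq}: substituting the diagonal form of an orthogonal tripotent $A$ into each of the fourteen conditions verifies them by direct inspection.

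For $(ii) \Rightarrow (i)$, the main tool on the rank side is the coprime-polynomial rank identity
\[
\operatorname{r}(M) + \operatorname{r}(I_n - M) = n + \operatorname{r}(M - M^2),
\]
applied to $M \in \{AA^*, A^*A, A^\dagger A^*, A^*A^\dagger\}$. Combined with the inequalities $\operatorname{r}(M) \leq r = \operatorname{r}(A)$ (which are equalities for $AA^*$ and $A^*A$, and become equalities for the other two once the rank hypothesis is imposed), each of $(a)$--$(d)$ forces the corresponding product to be idempotent of rank $r$. Inserting the Hartwig-Spindelb\"{o}ck form of $A$ from Lemma~\ref{y1} and of $A^\dagger$ from (\ref{eMP}) into this idempotence relation gives, for $(a)$ and $(b)$, the conclusion $\Sigma = I_r$; for $(c)$ and $(d)$, the weaker conclusion that $K$ is invertible and $K\Sigma K = \Sigma$.

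The second step is to substitute the Hartwig-Spindelb\"{o}ck decomposition into whichever equation condition $(a')$--$(g')$ is assumed and read off the resulting block relations, exactly as in the proofs of Theorem~\ref{zx} and Corollary~\ref{lk}. In each case, $L = 0$ falls out of one of the off-diagonal blocks, making $K$ unitary; the on-diagonal block yields a relation of the form $\Sigma^{\alpha}K^{\beta} = \Sigma^{\gamma}K^{\delta}$ for integer exponents depending on $s,t$. Crucially, each of $(a')$--$(g')$ differs from the corresponding condition of Theorem~\ref{zx} only by dropping one of its two parameter restrictions, and the role of that dropped restriction was precisely to force $\Sigma = I_r$. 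In the present theorem that role is played either directly (for $(a)$ and $(b)$) or jointly with $K\Sigma K = \Sigma$ (for $(c)$ and $(d)$) by the rank condition. The surviving parameter restriction then forces $\Sigma^{2\alpha} = I_r$ for some nonzero integer $\alpha$, hence $\Sigma = I_r$; the block relation reduces to $K^2 = I_r$, and Lemma~\ref{y5}$(h)$ concludes $A \in \mathbb{C}_n^{3\text{-}OP}$.

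In the write-up I would present the combination $(c)+(f')$ as the representative case, since $(c)$ is the most delicate rank condition. From the lower-left block of $A(AA^*)^s = A^\dagger(AA^*)^t$ one deduces $L = 0$, so $K$ is unitary; multiplying the upper-left block equation on the left by $K$ and invoking $K\Sigma K = \Sigma$ from $(c)$ yields $\Sigma^{2(s-t+1)} = I_r$, and the assumption $s \neq t-1$ then gives $\Sigma = I_r$ and $K^2 = I_r$. The main obstacle is just the bookkeeping for the $4 \times 7 = 28$ combinations, but all follow the same template and no ideas beyond those already used in Theorems~\ref{zx} and~\ref{t11} are required.
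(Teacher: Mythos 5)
Your overall strategy is the same as the paper's: the Hartwig--Spindelb\"{o}ck decomposition throughout, with the rank conditions $(a)$--$(d)$ supplying either $\Sigma = I_r$ or $K^*\Sigma^{-1}K^*\Sigma = I_r$, and the equation conditions supplying $L=0$ together with an exponent relation between $\Sigma$ and $K$. Your detour through the coprime-polynomial rank identity and idempotence is only cosmetically different from the paper, which reads the same conclusions directly off the block form of $I_n - M$; your representative case $(c)+(f')$ is computed correctly.

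The genuine gap is the blanket claim that ``the surviving parameter restriction then forces $\Sigma^{2\alpha}=I_r$ for some nonzero integer $\alpha$.'' This fails for $(e')$. Comparing with Theorem~\ref{zx}, the equation $A(A^*A)^s=A^{\dagger}(AA^*)^t$ reduces, after $L=0$ and the commutation supplied by $(c)$ or $(d)$, to $\Sigma^{2(s-t+1)}=I_r$, so the restriction needed to force $\Sigma=I_r$ is $s\neq t-1$, not the stated $s\neq t$; the hypothesis $s\neq t$ does no work here. When $s=t-1$ the exponent relation is vacuous and $\Sigma$ is unconstrained: the $1\times 1$ matrix $A=(2)$ with $(s,t)=(1,2)$ satisfies $(c)$ and $(d)$ (since $A^{\dagger}A^*=A^*A^{\dagger}=1$) and satisfies $(e')$ (both sides equal $8$), yet $A^3\neq A$. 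So the combination of $(c)$ or $(d)$ with $(e')$ is not merely unproved by your argument --- it is false as stated, and the restriction in $(e')$ ought to read $s\neq t-1$. (The combinations of $(e')$ with $(a)$ or $(b)$ do survive, because there $\Sigma=I_r$ is forced before the equation is used.) Your proposal inherits this error rather than detecting it; to be fair, the paper's own proof, which writes out only the case $(b')$ and asserts the rest ``follow similarly,'' has the same blind spot. The remaining combinations do go through along the lines you describe, though you should also make explicit where the full commutation $K\Sigma=\Sigma K$ (obtained from $K\Sigma K=\Sigma$, unitarity of $K$, and Lemma~\ref{y2}), rather than $K\Sigma K=\Sigma$ alone, is needed before the exponents can be collected --- for instance in $(b')$, where the two sides of $K^*\Sigma^{2s+1}=\Sigma^{2t+1}K$ carry $\Sigma$ on opposite sides of $K$.
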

\begin{proof}   
    From Lemma~\ref{y1}, we conclude that
    \begin{eqnarray*}
    I_n - AA^*=U\begin{pmatrix}I_r - \Sigma^2 & 0 \\ 0 & I_{n - r}\end{pmatrix}U^*.
    \end{eqnarray*}
    Given $\operatorname{r}(I_{n}-AA^{*}) = n - \operatorname{r}(A)$, it follows that $\operatorname{r}(I_r - \Sigma^2)=0$, which implies $\Sigma^2 = I_r$ and hence $\Sigma = I_r$.
    Note that $\operatorname{r}(I_{n}-AA^{*})=\operatorname{r}(I_{n}-A^{*}A)$, it follows from conditions $(a)$ and $(b)$ that
    \begin{eqnarray*}\operatorname{r}(I_{n}-AA^{*}) = n - \operatorname{r}(A) \Leftrightarrow  \operatorname{r}(I_{n}-A^{*}A) = n - \operatorname{r}(A) \Leftrightarrow  \Sigma = I_r.\end{eqnarray*}
    Similarly,
    \begin{eqnarray*}
    I_n - A^{\dagger}A^{*}=U\begin{pmatrix}I_r - K^*\Sigma^{-1}K^*\Sigma & 0 \\ 0 & I_{n - r}\end{pmatrix}U^*.
    \end{eqnarray*}
    Given $\operatorname{r}(I_{n}-A^{\dagger}A^{*}) = n - \operatorname{r}(A)$, we conclude that $\operatorname{r}(I_r - K^*\Sigma^{-1}K^*\Sigma)=0$, which implies $K^*\Sigma^{-1}K^*\Sigma = I_r$.
    By Theorem~\ref{zx}, all equivalent conditions in $(a')$--$(g')$ require $L = 0$, which when combined with $K^*\Sigma^{-1}K^*\Sigma = I_r$ yields $K^*\Sigma = \Sigma K$. This equality implies $\Sigma^2 K = K \Sigma^2$ through $K^*\Sigma \Sigma K = \Sigma K K^*\Sigma$, and consequently Lemma~\ref{y2} directly establishes $K\Sigma = \Sigma K$.
    Since $\operatorname{r}(I_{n}-A^{\dagger}A^{*}) = \operatorname{r}(I_{n}-A^{*}A^{\dagger})$, the conditions $(c)$ and $(d)$ imply for $L=0$ that 
    \begin{eqnarray*}\operatorname{r}(I_{n}-A^{\dagger}A^{*}) = n - \operatorname{r}(A) \Leftrightarrow  \operatorname{r}(I_{n}-A^{*}A^{\dagger}) = n - \operatorname{r}(A) \Leftrightarrow  K\Sigma = \Sigma K. \end{eqnarray*}
      Let us now consider conditions $(a')$--$(g')$. We present only the proof for condition ($b'$) satisfying any condition among $(a)$--$(d)$. The remaining conditions of $(a')$--$(g')$ follow similarly.\\
    Condition ($b'$) follows from Theorem~\ref{zx} and is equivalent to
    $$
    L = 0 \quad \text{and} \quad K^{*}\Sigma^{2s + 1} = \Sigma^{2t + 1}K.
    $$
    If condition ($a$) or ($b$) holds, $\Sigma = I_r$ trivially implies $K = K^*$. If condition ($c$) or ($d$), $K\Sigma = \Sigma K$ leads to $K^{*}\Sigma^{2s + 1} = K\Sigma^{2t + 1}$, from which $K^2 = (K^*)^2 = \Sigma^{2(s-t)}$ follows. Since $\Sigma^{4(s-t)} = I_r$ with $s \neq t$, we obtain $\Sigma = I_r$, thereby establishing $K = K^*$ again.\\
    Hence, by Lemma~{\ref{y5}}, we conclude that $A \in \mathbb{C}_{n}^{3\text{-}OP}$.\\
    For the converse implication, Theorem~\ref{t3} yields $A = A^* = A^{\dagger}$, which immediately implies that conditions $(a')$--$(g')$ hold, and moreover, $A^3 = A$. When $A = A^3$, $\operatorname{r}(I_n - A^2) = n - \operatorname{r}(A)$ always holds, and hence conditions $(a)$--$(d)$ are satisfied.
\end{proof}


\section{Conclusion}
In this paper, we explore the orthogonal tripotent matrices, which are an extension of the orthogonal idempotent (projection) matrices.
We use mean equations of $ A $, $ A^* $, and $ A^\dagger $, along with integer powers of $ AA^* $ and $ A^*A $, 
to reveal algebraic properties of orthogonal tripotent matrices. 
 Furthermore, by rank, trace, and Moore-Penrose inverse,  
 we establish necessary and sufficient conditions for orthogonal tripotent matrices. 
The following topics are proposed for further research:
\begin{itemize}
\item The study of orthogonal tripotent operators in Hilbert Spaces.
\item  The investigation of orthogonal idempotent and orthogonal tripotent matrices over dual numbers \cite{Cui,Qi}. 
\end{itemize}

\section*{Funding}
This work is supported by the National Natural Science Foundation of China (No.11961076).

\section*{Disclosure statement}

No potential conflict of interest was reported by the authors.


\begin{thebibliography}{10}
  \providecommand{\url}[1]{\normalfont{#1}}
  \providecommand{\urlprefix}{Available from: }

\bibitem{Ben2003}
Ben-Israel A, Greville TNE. Generalized inverses: theory and applications. New York: Springer, 2003.

\bibitem{Baksalary2022}
Baksalary OM, Trenkler G. On matrices whose Moore--Penrose inverse is idempotent. Linear and Multilinear Algebra, 2022, 70(11): 2014--2026. 

\bibitem{Baksalary2}
Baksalary OM, Trenkler G. Core inverse of matrices. Linear and Multilinear Algebra, 2009, 58(6): 681--697. doi:10.1080/03081080902778222

\bibitem{Maria2008}
Baksalary OM, Trenkler G. Characterizations of EP, normal, and Hermitian matrices. Linear and Multilinear Algebra, 2008, 56(3): 299--304. doi:10.1080/03081080600872616

\bibitem{Baksalary}
Baksalary JK, Baksalary OM, Liu X, et al. Further results on generalized and hypergeneralized projectors. Linear Algebra and its Applications, 2008, 429(5-6): 1038--1050. doi:10.1016/j.laa.2007.03.029

\bibitem{Cvetkovi}
Cvetkovi{\'c}-Ili{\'c} DS, Wei Y. Algebraic properties of generalized inverses. New York: Springer, 2017.

\bibitem{Cui}
Cui C, Qi L. A genuine extension of the Moore-Penrose inverse to dual matrices. Journal of Computational and Applied Mathematics, 2025, 454: 116185. doi:10.1016/j.cam.2024.116185

\bibitem{Deng}
Deng C, Cvetkovi{\'c}-Ili{\'c} DS, Wei Y. Properties of the combinations of commutative idempotents. Linear Algebra and its Applications, 2012, 436: 202--221.

\bibitem{Fisher1991}
Fisher G. Teaching Econometric Theory from the Co-ordinate-free Viewpoint. Concordia University, Department of Economics, 1991. doi:10.1017/CBO9780511546822

\bibitem{Ferreyra}
Ferreyra DE, Levis FE, Thome N. Revisiting the core EP inverse and its extension to rectangular matrices. Quaestiones Mathematicae, 2017, 41(2): 265--281. doi:10.2989/16073606.2017.1377779

\bibitem{Gao2024}
Gao J, Wang H, Liu S. The CPO-inverse and its partial orders. Linear and Multilinear Algebra, 2025, 73(3): 427--460. doi:10.1080/03081087.2024.2347575


\bibitem{Khatri1980}
Khatri CG. Powers of matrices and idempotency. Linear Algebra and its Applications, 1980, 33: 57--65. doi:10.1016/0024-3795(80)90097-X

\bibitem{Malik2025}
Malik SB, Ferreyra DE, Levis FE, et al. The projection m-weak group inverse. Linear and Multilinear Algebra, 2025, 1--17. doi:10.1080/03081087.2025.2489419

\bibitem{Penrose}
Penrose R. A generalized inverse for matrices. Mathematical Proceedings of the Cambridge Philosophical Society, 1955, 51, 406--413.

\bibitem{Qi}
Qi L, Ling C, Yan H. Dual quaternions and dual quaternion vectors. Communications on Applied Mathematics and Computation, 2022, 4(4): 1494-1508. doi:10.1007/s42967-022-00189-y

\bibitem{Riesz2012}
Riesz F, Nagy B. Functional Analysis. Chelmsford: Courier Corporation. 2012.

\bibitem{Sitha2023}
Sitha B, Sahoo JK, Behera R, et al. Generalized core-EP inverse for square matrices. Computational and Applied Mathematics, 2023, 42(8): 348. doi:10.1007/s40314-023-02458-9

\bibitem{Toutenburg1991}
Toutenburg H, Trenkler G. Proxy variables and mean square error dominance in linear regression. Lehrstuhl f{\"u}r Statistik, University, 1991.

\bibitem{Tosic}
Tosic M, Cvetkovi{\'c}-Ili{\'c} DS. The invertibility of the difference and the sum of commuting generalized and hypergeneralized projectors. Linear Multilinear Algebra, 2013, 61(4): 482--493. doi:10.1080/03081087.2012.689987

\bibitem{Wang}
Wang H, Jiang T, Ling Q, et al. Dual core-nilpotent decomposition and dual binary relation. Linear Algebra and its Applications, 2024, 684: 127-157. doi:10.1016/j.laa.2023.12.014

\bibitem{Zuo}
Zuo K, Li Y, Alazemi A. Further characterizations of $k$-generalized projectors and k-hypergeneralized projectors. Filomat, 2023, 37(16): 5347--5359. doi:10.2298/FIL2316347Z

\bibitem{Zou}
Zou H, Cvetkovi{\'c}-Ili{\'c} DS, Chen J, et al. Characterizations for the core invertibility and EP-ness involving projections. Algebra Colloquium, 2022, 20(03): 385--404. doi:10.1142/S1005386722000293
\end{thebibliography}

\end{document}